\documentclass[12pt]{article}

\usepackage{amsthm,amsmath,a4wide}
\usepackage{amssymb}
\usepackage[hidelinks]{hyperref}
\input xypic
\usepackage[all,tips]{xy}
\newtheorem{theo}{Theorem}[section]
\newtheorem{lem}[theo]{Lemma}
\newtheorem{prop}[theo]{Proposition}
\newtheorem{corr}[theo]{Corollary}
\newtheorem{conj}[theo]{Conjecture}

\theoremstyle{definition}
\newtheorem{defin}[theo]{Definition}

\theoremstyle{rem}
\newtheorem{rem}[theo]{Remark}
\newtheorem{example}[theo]{Example}

\newcommand{\bth}{\begin{theo}}
\renewcommand{\eth}{\end{theo}}
\newcommand{\bpr}{\begin{prop}}
\newcommand{\epr}{\end{prop}}
\newcommand{\ble}{\begin{lem}}
\newcommand{\ele}{\end{lem}}
\newcommand{\bco}{\begin{corr}}
\newcommand{\eco}{\end{corr}}
\newcommand{\bde}{\begin{defin}}
\newcommand{\ede}{\end{defin}}
\newcommand{\bex}{\begin{example}}
\newcommand{\eex}{\end{example}}
\newcommand{\bre}{\begin{rem}}
\newcommand{\ere}{\end{rem}}
\newcommand{\bcj}{\begin{conj}}
\newcommand{\ecj}{\end{conj}}

\newcommand{\cC}{\mathcal{C}}
\newcommand{\cZ}{\mathcal{Z}}

\newcommand{\D}{\mathcal{D}}

\newcommand{\Rep}{\mathcal{R}{\it ep}}
\newcommand{\Vect}{\mathcal{V}{\it ect}}

\newcommand{\sve}{{\scriptscriptstyle{\vee}}}

\renewcommand{\epsilon}{\varepsilon}

\newcommand{\beq}{\begin{equation}}
\newcommand{\eeq}{\end{equation}}

\newcommand{\ot}{{\otimes}}
\newcommand{\op}{{\oplus}}
\newcommand{\lb}{\label}
\newcommand{\nl}{\newline}
\newcommand{\bpf}{\begin{proof}}
\newcommand{\epf}{\end{proof}}

\newcommand{\C}{{\cal C}}
\newcommand{\Z}{{\cal Z}}
\newcommand{\V}{{\cal V}}
\newcommand{\supp}{\mathrm{supp}}

\everymath{\displaystyle}
\setcounter{section}{-1}

\begin{document}

\title{On Lagrangian algebras in group-theoretical braided fusion categories}
\author{Alexei Davydov and Darren Simmons}
\maketitle
\begin{center}
Department of Mathematics, Ohio University, Athens, OH 45701, USA
\end{center}

\begin{abstract}
We describe Lagrangian algebras in twisted Drinfeld centres for finite groups.
\nl
Using the full centre construction, we establish a 1-1 correspondence between Lagrangian algebras and module categories over pointed fusion categories.
\end{abstract}

\tableofcontents

\section{Introduction}

In this paper, we classify Lagrangian algebras in group-theoretical modular categories.
This, in particular, gives a classification of physical modular invariants for group-theoretical modular data (a problem raised in \cite{cg}).
It should be mentioned that the set of labels for physical modular invariants was obtained in \cite{os1} (using the language of module categories). What was missing was a way of recovering the modular invariant corresponding to a label.
By establishing a correspondence between Lagrangian algebras and module categories, and by computing the characters of Lagrangian algebras, we give a method for determining modular invariants corresponding to module categories.

By a group-theoretical modular category $\Z(G,\alpha)$, we mean the monoidal (or Drinfeld) centre $\Z(\V(G,\alpha))$ of the category of vector spaces $\V(G,\alpha)$ graded by a finite group $G$. Here, $\alpha$ is a 3-cocycle of $G$ with coefficients in the multiplicative group $k^*$ of the ground field, which is used to twist the standard associativity constraint for the tensor product of graded vector spaces.  More precisely, we describe objects of $\Z(G,\alpha)$ explicitly as $G$-graded vector spaces with compatible $G$-action (section \ref{gtbc}), and prove later (section \ref{mcgs}) that $\Z(G,\alpha)$ is isomorphic to the monoidal centre $\Z(\V(G,\alpha))$. \newline

A commutative algebra $A$ in a braided fusion category $\C$ is Lagrangian if any local $A$-module in $\C$ is a direct sum of copies of $A$ (we recall basic concepts of braided tensor categories in section \ref{prel}). 
We classify Lagrangian and more general indecomposable commutative separable (etale for short) algebras in $\Z(G,\alpha)$ in two steps.  First, we describe etale algebras with the trivial grading (section \ref{eatd}). These are nothing but indecomposable commutative separable algebras with a $G$-action and hence are just function algebras on transitive $G$-sets (we work over an algebraically closed field $k$).  Up to isomorphism, they are labelled by (conjugacy classes of) subgroups $H\subset G$.  Then we identify the category of local modules $\Z(G,\alpha)_{k(G/H)}^\mathrm{loc}$ with the group-theoretical modular category $\Z(H,\alpha|_H)$ (theorem \ref{comgalg}). A general etale algebra in $\Z(G,\alpha)$ is an extension of its trivial degree component, and hence is an etale algebra in one of $\Z(G,\alpha)_{k(G/H)}^\mathrm{loc}$. Considered as an algebra in $\Z(H,\alpha|_H)$, it has the one-dimensional trivial degree component.  Our second step (section \ref{eatt}) is to classify such algebras (proposition \ref{ttd}) and their categories of local modules (theorem \ref{loc2}).  Then in section \ref{ealm}, we combine the results obtaining the description of all etale algebras in $\Z(G,\alpha)$ (theorem \ref{main}) and their local modules (theorem \ref{local}). As a corollary, we get a classification of Lagrangian algebras in $\Z(G,\alpha)$ (corollary \ref{Lagr}). They are parametrised by (conjugacy classes of) subgroups $H\subset G$ together with a coboundary $d(\gamma)=\alpha|_H$ matching with the answer from \cite{os1}.  In section \ref{fc}, we explain this matching by identifying Lagrangian algebras with full centres of indecomposable separable algebras in $\V(G,\alpha)$ (theorem \ref{fcia}).  Finally, after recalling the character theory for objects of $\Z(G,\alpha)$ (section \ref{char}), we compute characters of Lagrangian algebras (theorem \ref{chla}).  We treat the case $G=S_3$ (the symmetric group on 3 symbols) as an example (section \ref{s3e}).\newline

This paper extends the results of \cite{da3} to the case of $\Z(G,\alpha)$ with a nontrivial cocycle $\alpha$.  It could have been titled ``Modular invariants for group-theoretical modular data II". The scheme of the proof we follow here is very similar to \cite{da3}. However, the present paper is not a mere extension of \cite{da3}:  the presence of a nontrivial cocycle makes all constructions and computations much more elaborate. For example, to identify categories of local modules in section \ref{eatd}, we design a recognition tool by looking at more general fusion categories of group theoretic origin (the appendix).  

\section*{Acknowledgment} 

The paper was completed while the first author was visiting Max Planck Institut f\"ur Mathematik (Bonn, Germany). He thanks the institution for perfect working conditions. 

\section{Preliminaries}\lb{prel}

Here, we recall a number of preliminary concepts.
Throughout $k$ is the ground field of characteristic zero and $\Vect = \Vect_k$ is the category of (finite dimensional) vector spaces over $k$. 

\subsection{Algebras and modules}
Let $\C$ be a monoidal category. 
An {\em associative unital algebra} in $\cC$ is an object $A\in\cC$ together with two morphisms $\mu:A\otimes A\rightarrow A$ and $\iota:I\rightarrow A$ such that 
$$\mu(\mu\otimes \mathrm{Id}_A) = \mu(\mathrm{Id}_A\otimes\mu),$$ and
$$\mu(\iota\otimes \mathrm{Id}_A) = \mathrm{Id}_A =\mu(\mathrm{Id}_A\otimes\iota).$$

By an ``algebra" , we will mean an associative unital algebra.  

Let $A\in\cC$ be an algebra.  A {\em right $A$-module} is an object $M\in\cC$ together with a morphism $\nu:M\otimes A\rightarrow M$ such that $$\nu(\nu\otimes\mathrm{Id}_A) = \nu(\mathrm{Id}_M\otimes\mu).$$  
Left $A$-modules are defined similarly.  
An {\em $A$-$B$-bimodule} $M\in\cC$ is a left $A$-module and a right $B$-module ($B\in\C$ is another algebra) such that the diagram of module action maps
$$\xymatrix{A\ot M\ot B\ar[r] \ar[d] & A\ot M \ar[d]\\ M\ot B\ar[r] & M}$$
commutes.  
\nl
We denote the category of right $A$-modules by $\cC_A$, and that of $A$-$B$-bimodules by $_{A}\cC_{B}$.

\subsection{Separable algebras}

For an algebra $A$ in a spherical category (with the spherical structure $s_X:X\to X^{**}$) denote by $\tau:A\to I$ (the {\em canonical trace function}) the composition
$$\xymatrix{A \ar[r]^(.35){1coev_A} & A\ot A\ot A^* \ar[r]^(.55){\mu 1} & A\ot A^* \ar[r]^{s_A1} & A^{**}\ot A^* \ar[r]^(.65){ev_{A^*}} & I}$$
An algebra $A$ is said to be {\em separable} if the following composition is a nondegenerate pairing (denoted $e:A\otimes A\to I$):  
$$\xymatrix{A\ot A \ar[r]^{\mu} & A \ar[r]^{t_A} & I}$$ 
Nondegeneracy of $e$ means that there is a morphism $\kappa:I\to A\otimes A$ such that the compositions \[A\overset{1\otimes\kappa}{\longrightarrow}A^{\otimes3}\overset{e\otimes1}{\longrightarrow}A\]
\[A\overset{\kappa\otimes1}{\longrightarrow}A^{\otimes3}\overset{1\otimes e}{\longrightarrow}A\] are the identity.  

If $A\in\cC$ is a separable algebra in a spherical fusion category, then the category $\cC_A$ of right $A$-modules is semisimple.

We call an algebra $A$ in a monoidal category $\C$ {\em simple} if any algebra homomorphism $A\to B$ is a monomorphism. 
\ble\lb{is}
Let $A$ be an indecomposable separable algebra in a spherical fusion category $\C$.
Then $A$ is simple.
\ele
\bpf
If an algebra $A$ is not simple then there is a surjective (but not bijective) algebra homomorphism $A\to B$. 
Via the inverse image along this homomorphism the category of $B$-modules $\C_B$ becomes a full subcategory of $\C_A$.
Moreover $\C_B$ is a full left $\C$-module subcategory of $\C_A$.
\nl
Recall from \cite[Proposition 3.9.]{eo} that a semisimple module category over a fusion category is a direct sum of its simple module subcategories.
In particular, $\C_B$ is a direct summand of $\C_A$ as a $\C$-module category.
Hence the algebra $\C(I,A)$ in $\Vect$ (which coincides with the algebra $\mathrm{End}_\C\left(\mathrm{Id}_{\C_A}\right)$ of $\C$-module endomorphisms of the identity functor of $\C_A$) is a non-trivial direct sum.
Thus $A$ is decomposable.
\epf

\subsection{Commutative algebras and local modules}

Let now $\cC$ be a braided monoidal category with the braiding $c_{X,Y}:X\otimes Y\overset{\sim}{\longrightarrow}Y\otimes X.$  
\nl
An algebra $A=\left\langle A,\mu,\iota\right\rangle$ in $\C$ is {\em commutative} if $$\mu=\mu\circ c_{A,A}.$$  
Following \cite{dmno} we call an indecomposable commutative and separable algebra {\em etale}.

A right $A$-module $M=\left\langle M,\nu\right\rangle$ is said to be {\em local} if the following diagram commutes:
$$\xymatrix{M\otimes A  \ar[r]^\nu \ar[d]_{c_{M,A}} & M\\ A\otimes M \ar[r]^{c_{A,M}} & M\otimes A \ar[u]_\nu}$$
We denote by $\C^\mathrm{loc}_A$ the full subcategory of $\C_A$ consisting of local right $A$-modules.  
\nl
Recall from \cite{pa} that (in case $\C$ has coequalisers) the category $\C_A$ is monoidal with respect to the relative tensor product $\ot_A$
and that the category $\C^\mathrm{loc}_A$ is braided. 

An algebra $L$ in a(n additive) braided monoidal category $\cC$ is said to be a {\em Lagrangian algebra} if any local $L$-module is a direct sum of copies of the regular module $L$.
\nl
Note that in the case when $\C$ is a $k$-linear braided monoidal category $L$ is Lagrangian iff the category $\C^\mathrm{loc}_L$ is equivalent to the category $\Vect_k$ of $k$-vector spaces.\newline

The following statement was proved in \cite{ffrs}; we state it here without proof.
\begin{lem}\label{algloc}
Let $\left(A,m,i\right)$ be a commutative algebra in a braided category $\cC$. Let $\left(B,\mu,\iota\right)$ be an algebra in ${\cC}_{A}$.  Define $\overline\mu$ and $\overline\iota$ as compositions
$$\xymatrix{B\otimes B \ar[r] & B\otimes_AB \ar[r]^<<<<{\mu} & B,& & & 1 \ar[r]^i & A \ar[r]^\iota & B.}$$ Then $(B,\overline\mu,\overline\iota)$ is an algebra in $\cC$.\newline
\noindent The map $\iota:A\to B$ is a homomorphism of algebras in $\cC$.\newline
\noindent The algebra $\left(B,\overline\mu,\overline\iota\right)$ in $\cC$ is separable or commutative if and only if the algebra $\left(B,\mu,\iota\right)$ in $\cC_A$ is such.\newline
The functor $\left(\cC_{A}^\mathrm{loc}\right)_{B}^\mathrm{loc}\longrightarrow\cC_{B}^\mathrm{loc}$ given by
\begin{equation}\label{locloc}
\left(M,m:B\otimes_A M\to M\right)\mapsto \left(M,\overline m:B\otimes M\to B\otimes_A M\stackrel{m}{\longrightarrow} M\right)
\end{equation}
\noindent is a braided monoidal equivalence.
\end{lem}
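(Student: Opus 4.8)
The plan is to verify the four assertions of Lemma \ref{algloc} in order, since each builds on the previous one. I would begin with the claim that $(B,\overline\mu,\overline\iota)$ is an algebra in $\cC$. Associativity of $\overline\mu$ reduces to associativity of $\mu$ together with the fact that the relative tensor product $\otimes_A$ is defined by a coequaliser: the map $B\otimes B\otimes B\to B\otimes_A B\otimes_A B$ factors the two obvious composites of $\overline\mu$ through the $A$-balanced quotient, so that both sides agree because $\mu$ is associative in $\cC_A$. The unit axiom for $\overline\iota=\iota\circ i$ follows because the composite $B\otimes I\to B\otimes A\to B$ induced by $\overline\iota$ equals the $A$-module structure map composed with the unit of $A$, which is the identity by the unit axiom in $\cC_A$. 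That $\iota:A\to B$ is an algebra homomorphism in $\cC$ is essentially a restatement of the compatibility of the $A$-action on $B$ with $\mu$.

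For the third assertion I would treat separability and commutativity separately. Commutativity is the cleaner of the two: the braiding on $\cC_A^{\mathrm{loc}}$ induced from $c$ descends to the coequaliser defining $\otimes_A$, so the equation $\overline\mu=\overline\mu\circ c_{B,B}$ holds in $\cC$ precisely when $\mu=\mu\circ c^A_{B,B}$ holds in $\cC_A$, where $c^A$ is the braiding in $\cC_A$. For separability, I would exhibit the separability morphism $\kappa:I\to B\otimes B$ in $\cC$ as the image of the separability morphism $\kappa_A:A\to B\otimes_A B$ in $\cC_A$ under the splitting $B\otimes_A B\to B\otimes B$ afforded by the semisimplicity of $\cC_A$ (recall that a separable algebra over a fusion category has semisimple module category, so $\otimes_A$ splits). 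Then the two nondegeneracy composites in $\cC$ match the corresponding composites in $\cC_A$ after identifying $e$ with its relative analogue.

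The fourth assertion, that the functor \eqref{locloc} is a braided monoidal equivalence, is where I expect the main difficulty to lie. I would first check it is well defined: given a local $B$-module $M$ in $\cC_A^{\mathrm{loc}}$, the induced action $\overline m:B\otimes M\to M$ must satisfy the $\cC$-locality diagram, and I would verify this by feeding the locality of $M$ over $B$ in $\cC_A$ together with locality of $M$ over $A$ through the definition of the braiding on $\cC_A^{\mathrm{loc}}$. To produce an inverse functor, I would send a local $B$-module $N$ in $\cC$ to $N$ equipped with its restricted $A$-action (making it an object of $\cC_A^{\mathrm{loc}}$, using that $\iota:A\to B$ is a homomorphism) together with the factored action $B\otimes_A N\to N$; the locality of $N$ over $B$ in $\cC$ forces both the $A$-locality and the relative $B$-locality. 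The two constructions are mutually inverse essentially on the nose, since the underlying object and the ultimate action map are unchanged, and the passage between $B\otimes N$ and $B\otimes_A N$ is exactly the coequaliser quotient.

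The genuinely delicate point, and the one I would spend the most care on, is compatibility with the monoidal and braided structures: I must check that the relative tensor product $\otimes_B$ computed inside $\cC_A^{\mathrm{loc}}$ agrees, under the functor, with the relative tensor product $\otimes_B$ computed directly in $\cC$, and likewise for the braidings. This amounts to comparing two iterated coequalisers (a quotient of a quotient versus a single quotient) and showing they coincide; the key is that an $A$-balanced, then $B$-balanced, tensor product is the same as a directly $B$-balanced one because $A\to B$ is a unital algebra map, so balancing over $B$ already entails balancing over $A$. Once this identification of coequalisers is in hand, the monoidal and braided coherence isomorphisms transport automatically, and the equivalence follows.
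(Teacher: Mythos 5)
The paper does not prove this lemma at all: it is stated with an explicit citation to \cite{ffrs} and the words ``we state it here without proof,'' so there is no in-paper argument to compare against. Your outline is essentially the standard proof and is sound: associativity and unitality of $\overline\mu$ via the coequaliser presentation of $\otimes_A$, commutativity via the fact that the braiding of $\cC$ descends to $\otimes_A$ on local modules, the inverse functor given by restriction of the action along $\iota$, and the identification of the two relative tensor products $\otimes_B$ by observing that $B$-balancing subsumes $A$-balancing --- this last point is indeed the crux of the monoidal/braided compatibility, and you have isolated it correctly. The one place where you silently import a hypothesis not present in the bare statement is the separability transfer: to lift the separability idempotent from $B\otimes_A B$ to $B\otimes B$ you need the projection $B\otimes B\to B\otimes_A B$ to split as a $B$-bimodule map, which requires $A$ itself to be separable (equivalently, in the fusion setting, that $\cC_A$ is semisimple). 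You invoke this via ``semisimplicity of $\cC_A$,'' which is legitimate in every application the paper makes (there $A$ is always etale in a fusion category), but you should state it as an explicit hypothesis rather than a recollection, since the lemma as written only assumes $A$ commutative in a braided category with coequalisers. With that caveat made explicit, your argument goes through.
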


\subsection{Full centre}

Recall from \cite{da4} that the {\em full centre} $Z(A)$ of an algebra $A$ in a monoidal category $\C$ is an object of the monoidal centre $\cZ(\cC)$ together with a morphism $Z(A)\to A$ in $\cC$, terminal among pairs $(Z,\zeta)$, where $Z\in\cZ(\cC)$ and $\zeta:Z\to A$ is a morphism in $\cC$ such that the following diagram commutes:
\begin{equation}\label{cp}
\xymatrix{ A\otimes Z \ar[r]^{A\zeta} \ar[dd]_{z_A} & A\otimes A \ar[rd]^\mu\\ & & A\\ 
 Z\otimes A \ar[r]_{\zeta A} 
 & A\otimes A \ar[ur]_\mu }
\end{equation}
Here $z_A$ is the half-braiding of $Z$ as an object of $\cZ(\cC)$.  The terminality condition means that for any such pair $(Z,\zeta)$ there is a unique morphism $Z\to Z(A)$ in the monoidal centre $\cZ(\cC)$, which makes the diagram
$$\xymatrix{ Z \ar[rr] \ar[rd]_\zeta && Z(A) \ar[ld] \\ & A & }$$ commute. 

Recall that the {\em $\alpha$-induction} is the tensor functor 
$$\alpha:\Z(\C)\to {_A{\C_A}},\qquad \alpha(Z) = Z\ot A,$$
with the left $A$-module structure given by 
$$\xymatrix{A\ot Z\ot A \ar[r]^{z_A\otimes1} & Z\ot A\ot A \ar[r]^{1\otimes\mu} & Z\ot A}$$

\bpr\lb{mfz}
The full centre $Z(A)$ of an indecomposable separable algebra $A$ in a fusion category $\C$ coincides with
the action internal end $[A,A]\in\Z(\C)$ of the trivial bimodule $A\in {_A{\C_A}}$ with respect to the $\Z(\C)$-action on ${_A{\C_A}}$ given by $\alpha$-induction.
\nl
The category of modules $\Z(\C)_{Z(A)}$ is equivalent, as a fusion category, to the category ${_A{\C_A}}$ of $A$-bimodules. 
\epr
\bpf
The universal property of the action internal end says that $[A,A]$ is the terminal object among $(Z,z)$ where $\xi\in\Z(\C)$ and $\xi:\alpha(Z)\to A$ is a morphism of $A$-bimodules. The right $A$-module map $\xi:Z\ot A\to A$ is completely determined by the morphism $\zeta=\xi(1\ot\iota):Z\to A$ (which is still a left $A$-module map). The left $A$-module property for $\zeta$ is exactly \eqref{cp}.
\nl
According to \cite{os} the functors
$$\xymatrix{\Z(\C)_{[A,A]} \ar@/^15pt/[rr]^{-\ot_{[A,A]}A} && {_A{\C_A}} \ar@/^15pt/[ll]^{[A,-]} }$$
are quasi-inverse equivalences.
The tensor structure
\beq\lb{msih}[A,M]\ot_{[A,A]}[A,N]\ \to\ [A,M\ot_AN]\eeq for the functor $[A,-]$ comes from the universal property of the action internal hom.
Indeed the composition 
$$\xymatrix{\alpha([A,M]\ot [A,N])\ot_A A \simeq \alpha([A,M])\ot_A A\ot_A\alpha([A,N])\ot_A A \ar[r] & M\ot_A N}$$ 
induces the map $[A,M]\ot [A,N]\to [A,M\ot_AN]$, which is naturally $[A,A]$-balanced, i.e. factors through $[A,M]\ot_{[A,A]}[A,N]$ giving rise to \eqref{msih}.
\epf
\bre
Here is a slightly different proof of the second statement of proposition \ref{mfz}. 
The canonical braided equivalence (Morita invariance of the monoidal centre) $\Z(\C)\to \Z({_A{\C_A}})$ sends the full centre $Z(A)$ to the full centre $Z(I)$ of the monoidal unit $I\in {_A{\C_A}}$ (which is really the $A$-bimodule $A$). 
\nl
For a fusion category $\D\left(={_A{\C_A}}\right)$, the full centre $Z(I)\in\Z(\D)$ coincides with $L(I)$, where $L:\D\to \Z(\D)$ is the adjoint to the forgetful functor $F:\Z(\D)\to \D$. The adjunction is monadic.  Moreover, the monad $T = L\circ F$ on $\Z(\D)$ is a $\Z(\D)$-module functor. Thus $T$-algebras are the same as $T(I)$-modules. Finally, $T(I) = L(I) = Z(I)$ and the forgetful functor factorises:
$$\xymatrix{\Z(\D) \ar[d] \ar[rd]^F \\ \Z(\D)_{Z(I)} \ar[r]^(.6){\sim} & \D}$$
\ere
\bth
The full centre $Z(A)$ of an indecomposable separable algebra $A$ in a fusion category $\C$ is a Lagrangian algebra in $\Z(\C)$.
\eth
\bpf
The tensor equivalence $\Z(\C)_{Z(A)} \to {_A{\C_A}}$ from proposition \ref{mfz} induces a braided tensor equivalence $\Z(\Z(\C)_{Z(A)}) \to \Z({_A{\C_A}})$.  By Morita invariance of the monoidal centre, $\Z({_A{\C_A}}) \simeq \Z(\C)$.
\nl
By \cite[Proposition 3.7]{dgno}, we have the decomposition into Deligne product $\Z(\C)\boxtimes\overline{\Z(\C)_{Z(A)}^\mathrm{loc}}\simeq \Z(\Z(\C)_{Z(A)})$.
\nl
Combining the above, we get $\Z(\C)\boxtimes\overline{\Z(\C)_{Z(A)}^\mathrm{loc}}\simeq\Z(\C)$, which means that $\Z(\C)_{Z(A)}^\mathrm{loc}\simeq\Vect$, i.e. $Z(A)$ is Lagrangian.
\epf

%%%%%%%%%%%%
\section{Commutative algebras in group-theoretical categories}\label{alggrth}

\subsection{Group-theoretical braided fusion categories}\lb{gtbc}

Let $G$ be a finite group. By $k^*$ we denote the multiplicative group of the ground field $k$.  
By a {\em 3-cocycle} of $G$ with coefficients in $k^*$, we mean a normalised 3-cocycle of the standard complex, {\em i.e.}, a function $\alpha:G\times G\times G\rightarrow k^*$ such that 
$$\alpha(g,h,l)\alpha(f,gh,l)\alpha(f,g,h)=\alpha(fg,h,l)\alpha(f,g,hl),\qquad f,g,h,l\in G$$
and such that $\alpha(f,g,h)=1$ each time one of $f,g,h$ is the identity.
\nl
We denote by $Z^3(G,k^*)$ the group of normalised 3-cocycles.

A vector space $V$ is {\em $G$-graded} if there given a direct sum decomposition $V=\op_{g\in G}{V_g}$.  The tensor product of graded vector spaces is graded in a natural way $(V\ot U)_f = \op_{gh=f}V_g\ot U_h$. The monoidal unit in $\mathcal{V}\left(G,\alpha\right)$ is $I=I_e=k$. 
A 3-cocycle $\alpha\in Z^3(G,k^*)$ can be used to twist the standard associativity constraint:
\beq\lb{ass}
\alpha_{U,V,W}\left(u\otimes\left(v\otimes w\right)\right) = \alpha(f,g,h)\left(u\otimes v\right)\otimes w,\qquad u\in U_f, v\in V_g, w\in W_h\ .
\eeq
Denote by $\V(G,\alpha)$ the category of $G$-graded vector spaces with grading preserving linear maps, equipped with the above structure of a fusion category,

An {\em $\alpha$-projective $G$-action} on a $G$-graded vector space $V$ is a collection of automorphisms 
$$f:V\to V,\qquad v\mapsto f.v$$ for each $f\in G$ such that $f\left(V_g\right) = V_{fgf^{-1}}$, and
\begin{equation}\label{pa}
(fg).v = \alpha(f,g|h)f.\left(g.v\right),\quad \forall v\in V_h.
\end{equation}
Here,
\begin{equation}\label{tefa}
\alpha(f,g|h) = \alpha(f,g,h)^{-1}\alpha(f,{^{g}{h}},g)\alpha({^{fg}{h}},f,g)^{-1}.
\end{equation}where $^{f}{h} = fhf^{-1}$. Similarly, define
\begin{equation}\label{acfa}
\alpha(f|g,h) = \alpha(f,g,h)\alpha({^{f}{g}},f,h)^{-1}\alpha({^{f}{g}},{^{f}{h}},f),
\end{equation}
The following identities follow directly from the 3-cocycle condition for normalized $\alpha$:
\beq\lb{pca}\alpha(f,gh|u)\alpha(g,h|u) = \alpha(fg,h|u)\alpha(f,g|huh^{-1}),\eeq
$$\alpha(fg|u,v)\alpha(f,g|u)\alpha(f,g|v) = \alpha(f,g|uv)\alpha(g|u,v)\alpha(f|gug^{-1},gvg^{-1}),$$
$$\alpha(g,h,u)\alpha(f|gh,u)\alpha(f|g,h) = \alpha(f|g,hu)\alpha(f|h,u)\alpha(fgf^{-1},fhf^{-1},fuf^{-1}).$$\newline

\noindent Define the category $\Z(G,\alpha)$ as follows.  Objects of $\Z(G,\alpha)$ are $G$-graded vector spaces  together with $\alpha$-projective $G$-action.  Morphisms are grading and action-preserving homomorphisms of vector spaces. 
\nl
Define the tensor product in $\Z(G,\alpha)$ is the tensor product of $G$-graded vector spaces, with $\alpha$-projective $G$-action defined by
\begin{equation}\label{tp}
f.\left(x\otimes y\right) = \alpha(f|g,h)\big(f.x\otimes f.y\big),\quad x\in X_g,\ y\in Y_h.
\end{equation}
The associativity is given by \eqref{ass}.
\nl
The monoidal unit is $I=I_e=k$ with trivial $G$-action.
\newline
The braiding is given by
\begin{equation}\label{br}
c_{X,Y}(x\otimes y) = f.y\otimes x,\quad x\in X_f, y\in Y.
\end{equation}

The following is well-known. We leave the proof to the reader (see also proposition \ref{mc}).
\bpr
The category $\Z(G,\alpha)$ defined above is braided monoidal.
\epr

\ble\lb{do}
The category $\Z(G,\alpha)$ is rigid, with dual objects $X^\sve = \oplus_f (X^\sve)_f$ given by
$$(X^\sve)_f = (X_{f^{-1}})^\sve = \mathrm{Hom}_{k}(X_{f^{-1}},k),$$ with the $\alpha$-projective action
$$(g.l)(x) = \frac{\alpha(g^{-1},g|f^{-1})}{\alpha(g|f,f^{-1})}l((g^{-1}.x)),\qquad l\in \mathrm{Hom}_{k}(X_{f^{-1}},k),\quad x\in X_{gf^{-1}g^{-1}}\ .$$
\ele
\bpf
It can be verified that the formula above indeed defines an $\alpha$-projective action on the graded vector space.
The evaluation morphism $X^\sve\ot X\to I$ is given by the canonical pairing 
$(X^\sve)_f \ot X_{f^{-1}}\to k$. Similarly the coevaluation morphism $I\to X\ot X^\sve$ is given by the canonical elements 
$k\to X_{f^{-1}}\ot (X^\sve)_f$. 
\epf

\ble\label{simpobj}
The category $\Z(G,\alpha)$ admits the following decomposition into a direct sum of $k$-linear subcategories:
\begin{equation}
\Z(G,\alpha)=\bigoplus_{f\in cl(G)}\Z_f(G,\alpha),
\end{equation}
where the sum is taken over a set $cl(G)$ of representatives of conjugacy classes of elements of $G$, and for $f\in G$, the subcategory $\Z_f(G,\alpha)$ is given by $$\Z_f(G,\alpha)=\left\{Z\in\Z(G,\alpha)\Big|\  \supp(Z)=f^G\right\}.$$  Here, $f^G=\left\{gfg^{-1}|g\in G\right\}$ denotes the conjugacy class of $f$ in $G$.
\ele
\begin{proof}
Clearly, the support of an object of $\Z(G,\alpha)$ is a union of conjugacy classes in $G$. It is also straightforward that for $Z\in \Z(G,\alpha)$, one has $$Z = \op_{c\in  cl(G)} Z_c,\qquad Z_c = \op_{f\in c}Z_f$$ is a decomposition into a direct sum of objects in $\Z(G,\alpha)$. \end{proof}

\ble\label{cat1}
The category $\Z_f(G,\alpha)$ is equivalent, as a $k$-linear category, to the category $k\left[C_{G}\left(f\right),\alpha\left(\ ,\ |f\right)^{-1}\right]$-$\mathrm{Mod}$ of left modules over the twisted group algebra $k\left[C_{G}\left(f\right),\alpha\left(\ ,\ |f\right)^{-1}\right]$.
\ele
\bpf
We show that the functor
$$F:\Z_f(G,\alpha)\rightarrow k\left[C_{G}\left(f\right),\alpha\left(\ ,\ |f\right)^{-1}\right]-\mathrm{Mod},\qquad F(Z)=Z_f$$ 
is an equivalence by exhibiting its quasi-inverse $E:k\left[C_{G}\left(f\right),\alpha\left(\ ,\ |f\right)^{-1}\right]$-$\mathrm{Mod}\rightarrow \Z_f(G,\alpha)$ given by
$$E(M)=\left\{a:G\rightarrow M\Big|\ a\left(xy\right)=\alpha\left(y^{-1},x^{-1}|xfx^{-1}\right)y^{-1}.a(x)\qquad\forall x\in G, y\in C_{G}\left(f\right)\right\}.$$
The $G$-grading on $E(M)$ is defined as follows:  a function $a\in E(M)$ is homogeneous if and only if $\supp(a)$ is a single coset modulo $C_{G}\left(f\right)$.  More precisely,
$$
|a|=xfx^{-1}\ \iff\ \supp(a)=xC_{G}(f).
$$
The $\alpha$-projective $G$-action on $E(M)$ is given by 
$$\left(g.a\right)(x)=\alpha\left(x^{-1},g|xfx^{-1}\right)a\left(g^{-1}x\right).$$
It is straightforward that these definitions make $E(M)$ into an object of $\Z_f(G,\alpha)$.
\nl
Now we define the adjunction isomorphisms $\phi:Id\to E\circ F$ and $\psi:F\circ E\to Id$. 
For $Z\in\Z_f(G,\alpha)$ and a homogeneous $z\in Z_{xfx^{-1}}$ define $\tilde{z}:G\rightarrow Z_f$ by 
$$
\tilde{z}(g)=\left\{\begin{array}{cc}g^{-1}.z, & g\in xC_{G}(f)\\ 0, & \mathrm{otherwise}\\ \end{array}\right..
$$
It is straightforward to see that $\tilde{z}\in E\left(F(Z)\right)$.  
\nl
For an object $V\in\mathcal{C}\left(\{f\},C_{G}(f),\alpha\right)$, define $\psi_V:F\left(E(V)\right)\rightarrow V$ by 
$\psi_V(a)=a(e)$.  One can verify directly that $\phi$ and $\psi$ are morphisms and inverse to one another.
\epf

\bco\label{simpl}
Simple objects $Z\in\Z(G,\alpha)$ are labelled by conjugacy classes of pairs $(f,M)$ where $f\in G$ and $M$ is a simple $k\left[C_{G}\left(f\right),\alpha\left(\ ,\ |f\right)^{-1}\right]$-module.
\eco

\bco
Let $k$ be a field of characteristic zero.
The category $\Z(G,\alpha)$ is fusion.
\eco

\subsection{Algebras in group-theoretical modular categories}

We start with expanding the structure of an algebra in the category $\Z(G,\alpha)$ in plain algebraic terms. Recall that a $G$-graded vector space $A = \op_{g\in G}A_g$ is a {\em $G$-graded algebra} if it is equipped with a grading preserving multiplication  $A_fA_g\subseteq A_{fg}$.  
We call a $G$-graded algebra $A$ {\em $\alpha$-associative} (for a 3-cocycle $\alpha\in Z^3(G,k^*)$) if $$a(bc) = \alpha(f,g,h)(ab)c,\qquad \forall a\in A_f, b\in A_g, c\in A_h.$$

\begin{prop}
An algebra $A$ in the category $\Z(G,\alpha)$ is a $G$-graded $\alpha$-associative algebra together with an $\alpha$-projective $G$-action such that
\begin{equation}\label{ah}
f.(ab) = \alpha(f|g,h)\left(f.a\right)\left(f.b\right),\quad a\in A_g, b\in A_h.
\end{equation}
An algebra $A$ in the category $\Z(G,\alpha)$ is commutative iff 
\begin{equation}\label{co}
ab = \left(f.b\right)a,\quad \forall a\in A_f, b\in A.
\end{equation}
%The twist $\theta_A$ is trivial iff $$f(a) = \alpha(f,f^{-1},f)a,\quad a\in A_f.$$
\end{prop}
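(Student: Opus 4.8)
The plan is to unpack the categorical axioms for an algebra object $(A,\mu,\iota)$ in $\Z(G,\alpha)$ into concrete conditions on the underlying graded vector space, treating the two assertions in turn. The whole argument is a translation of definitions; well-definedness of the categorical structures is already guaranteed by the preceding proposition that $\Z(G,\alpha)$ is braided monoidal, so I only need to read off what each axiom says.

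First I would record that $\mu:A\ot A\to A$ amounts to a bilinear product $ab:=\mu(a\ot b)$, and that being a morphism in $\Z(G,\alpha)$ means exactly that $\mu$ is grading- and action-preserving. Grading preservation, since $(A\ot A)_f=\op_{gh=f}A_g\ot A_h$, is equivalent to $A_gA_h\subseteq A_{gh}$, i.e. $A$ is a $G$-graded algebra. Compatibility with the $G$-action, read off from the tensor-product action \eqref{tp}, gives $f.(ab)=f.\mu(a\ot b)=\mu\big(f.(a\ot b)\big)=\alpha(f|g,h)(f.a)(f.b)$ for $a\in A_g,b\in A_h$, which is precisely \eqref{ah}.

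Next comes the associativity axiom, and this is the one step where care is needed: the axiom $\mu(\mu\ot 1)=\mu(1\ot\mu)$ is written for a strict category, so in $\Z(G,\alpha)$ it must be read with the nontrivial associator \eqref{ass} inserted. Tracking the orientation of \eqref{ass}, which sends $a\ot(b\ot c)\mapsto \alpha(f,g,h)(a\ot b)\ot c$ for $a\in A_f,b\in A_g,c\in A_h$, the equation $\mu(\mu\ot 1)\circ\alpha_{A,A,A}=\mu(1\ot\mu)$ evaluates to $\alpha(f,g,h)(ab)c=a(bc)$, i.e. $\alpha$-associativity. The unit $\iota:I\to A$, with $I=I_e=k$ carrying trivial action, picks out $1_A=\iota(1)\in A_e$; the morphism condition forces $f.1_A=1_A$, and the unit axioms give $1_Aa=a=a1_A$. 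Conversely, any $G$-graded $\alpha$-associative algebra with $\alpha$-projective action satisfying \eqref{ah} defines a genuine algebra object, since \eqref{ah} makes $\mu$ action-preserving and the graded condition makes it grading-preserving; hence the first assertion is an equivalence.

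For the commutativity statement I would compute $\mu\circ c_{A,A}$ directly from the braiding \eqref{br}. For $a\in A_f$ and $b\in A$ one has $c_{A,A}(a\ot b)=f.b\ot a$, so $\mu\big(c_{A,A}(a\ot b)\big)=(f.b)a$, while $\mu(a\ot b)=ab$. The defining condition $\mu=\mu\circ c_{A,A}$ is therefore equivalent to $ab=(f.b)a$ for all $a\in A_f$, which is \eqref{co}. I expect the only genuine obstacle to be the bookkeeping of the associator in the associativity step — in particular fixing its orientation so that the factor $\alpha(f,g,h)$ lands on the correct side of the identity; everything else is a direct substitution of the defining formulas.
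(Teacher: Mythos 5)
Your proposal is correct and follows the same route as the paper's own proof: read off grading- and action-preservation of $\mu$ to get the graded structure and \eqref{ah}, insert the associator \eqref{ass} into the associativity axiom to get $\alpha$-associativity, and substitute the braiding \eqref{br} into $\mu=\mu\circ c_{A,A}$ to get \eqref{co}. The paper states these three steps without elaboration; your version merely spells out the computations (and adds the unit discussion), with the associator orientation handled correctly.
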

\begin{proof}
Being a morphism in the category $\Z(G,\alpha)$, the multiplication of an algebra in $\Z(G,\alpha)$ preserves grading and $\alpha$-projective $G$-action (hence the property (\ref{ah})). Associativity of multiplication in $\Z(G,\alpha)$ is equivalent to $\alpha$-associativity.
\newline
The formula (\ref{br}) for the braiding in $\Z(G,\alpha)$ implies that commutativity for an algebra $A$ in the
category $\Z(G,\alpha)$ is equivalent to the condition (\ref{co}).
\end{proof}

A {\em $G$-algebra} is an algebra $A$ (in $\Vect$) together with an action of $G$ on $A$ by algebra homomorphisms.
\begin{corr}
The degree $e$ part $A_e$ of an algebra $A$ in the category $\Z(G,\alpha)$ is an associative $G$-algebra and $A$ is a
module over $A_e$. The algebra $A_e$ is commutative if $A$ is a commutative algebra in the category $\Z(G,\alpha)$.
\end{corr}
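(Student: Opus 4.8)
The plan is to unpack the description of an algebra in $\Z(G,\alpha)$ provided by the preceding proposition and to observe that every $\alpha$-twist occurring in those formulas becomes trivial once restricted to the degree $e$ component, precisely because $\alpha$ is normalised (it equals $1$ whenever one of its arguments is $e$). First I would establish that $A_e$ is an ordinary associative algebra: grading preservation gives $A_eA_e\subseteq A_{ee}=A_e$, so $A_e$ is closed under multiplication and contains the unit, and the $\alpha$-associativity relation $a(bc)=\alpha(f,g,h)(ab)c$ specialises on $A_e$ to $a(bc)=\alpha(e,e,e)(ab)c=(ab)c$ since $\alpha(e,e,e)=1$.

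Next I would treat the $G$-action. Since $f.A_g=A_{fgf^{-1}}$, we have $f.A_e=A_{fef^{-1}}=A_e$, so $A_e$ is $G$-stable. Two twisting factors must be checked. From \eqref{tefa}, $\alpha(f,g|e)=\alpha(f,g,e)^{-1}\alpha(f,e,g)\alpha(e,f,g)^{-1}=1$ by normalisation, so the defining relation \eqref{pa} reads $(fg).v=f.(g.v)$ on $A_e$; thus the $\alpha$-projective action restricts to a genuine $G$-action, and setting $f=e$ in \eqref{pa} shows $e$ acts as the identity. From \eqref{acfa}, $\alpha(f|e,e)=\alpha(f,e,e)\alpha(e,f,e)^{-1}\alpha(e,e,f)=1$, so the compatibility \eqref{ah} gives $f.(ab)=(f.a)(f.b)$ for $a,b\in A_e$, i.e.\ $G$ acts by algebra homomorphisms. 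Together these make $A_e$ an associative $G$-algebra.

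For the module assertion, the multiplication of $A$ restricts by grading to maps $A_e\otimes A\to A$ and $A\otimes A_e\to A$, and the relevant instance of $\alpha$-associativity, $a(a'x)=\alpha(e,e,g)(aa')x=(aa')x$ for $a,a'\in A_e$ and $x\in A_g$, again trivialises because $e$ occurs among the arguments; hence $A$ is a module over $A_e$. Finally, if $A$ is commutative in $\Z(G,\alpha)$, then taking $f=e$ in the commutativity criterion \eqref{co} and using $e.b=b$ gives $ab=(e.b)a=ba$ for all $a,b\in A_e$, so $A_e$ is commutative.

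The only real work, and the main (if modest) obstacle, is the bookkeeping that each factor $\alpha(f,g|e)$, $\alpha(f|e,e)$ and each associator value appearing above has $e$ among its arguments and therefore equals $1$; once this is recorded, every structure degenerates from its twisted version to the honest one and the statement follows formally.
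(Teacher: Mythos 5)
Your proposal is correct and follows essentially the same route as the paper: the normalisation of $\alpha$ makes every twisting factor with an $e$ among its arguments equal to $1$, so the structures on $A_e$ degenerate to the untwisted ones, and commutativity of $A_e$ follows from \eqref{co} with $f=e$. The paper's proof is a compressed version of exactly this bookkeeping.
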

\begin{proof}
The normalization condition for the cocycle $\alpha$, together with $\alpha$-associativity of $A$, implies that $A_e$ is
an associative algebra and that $A$ is a $A_e$-module. The same normalization condition, together with $\alpha$-projectivity
of the $G$-action and the property (\ref{ah}), implies that the action of $G$ on $A_e$ is genuine and that $G$ acts on $A_e$
by algebra homomorphisms. Commutativity of $A_e$ for a commutative algebra $A\in \Z(G,\alpha)$ follows directly from
the identity (\ref{co}).
\end{proof}

\begin{prop}\label{sep}
An algebra $A$ in the category $\Z(G,\alpha)$ is separable if and only if the composition \[A_f\otimes A_{f^{-1}}\overset{\mu}{\longrightarrow}A_e\overset{\tau}{\longrightarrow}k\] defines a nondegenerate bilinear pairing for any $f\in G$. In particular, the algebra $A_e$ is separable if $A$ is a separable algebra in the category $\Z(G,\alpha)$.
\end{prop}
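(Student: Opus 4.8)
The plan is to exploit the fact that both the trace $\tau$ and the separability pairing $e=\tau\circ\mu$ are morphisms in $\Z(G,\alpha)$, so that the grading forces a block decomposition, and then to reduce categorical nondegeneracy to a statement about each block. Since the unit $I=I_e$ lives in degree $e$, the morphism $\tau\colon A\to I$ kills $A_g$ for $g\neq e$, so $e$ vanishes on $A_f\otimes A_g$ unless $g=f^{-1}$; thus $e$ is the orthogonal sum of the blocks $e_f\colon A_f\otimes A_{f^{-1}}\to k$, $(a,b)\mapsto\tau(ab)$. The only thing to prove is then that the existence of a categorical copairing $\kappa\colon I\to A\otimes A$ in $\Z(G,\alpha)$ is equivalent to the nondegeneracy of every $e_f$. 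I would package this through the morphism $\hat e\colon A\to A^\sve$ obtained by composing $1\otimes\mathrm{coev}_A$ with $e\otimes 1$; by Lemma \ref{do} the $f$-graded component of $\hat e$ is exactly the map $A_f\to (A_{f^{-1}})^\sve$ induced by $e_f$. The pairing $e$ is nondegenerate in the categorical sense precisely when $\hat e$ is an isomorphism, and since $\hat e$ is graded this holds iff each $e_f$ is a perfect pairing of vector spaces.

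The one point needing care in the converse direction is the $G$-action: a priori the copairing must be a morphism in $\Z(G,\alpha)$, not merely a linear map. This is automatic here, because $\hat e$ is already a morphism in $\Z(G,\alpha)$, and the inverse of a bijective morphism in $\Z(G,\alpha)$ is again grading- and action-preserving; composing $\mathrm{coev}_A$ with $(\hat e)^{-1}$ then produces the required $\kappa$ inside the category. So no separate verification of the cocycle compatibility of the separability element is needed, which is what makes the block criterion clean.

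For the final (``in particular'') assertion I would specialise to $f=e$, which gives a nondegenerate pairing $e_e(a,b)=\tau(ab)$ on $A_e$. The heart of the matter is to identify $\tau$ explicitly: unwinding its definition with the (standard, positive) spherical structure of $\Z(G,\alpha)$ shows that $\tau(a)$ is the ordinary trace of left multiplication $L_a$ on the whole object $A$, so that $e_e(a,b)=\mathrm{tr}_A(L_{ab})$ is the trace form of $A_e$ acting on the faithful $A_e$-module $A$ (faithful since $A_e$ is a graded summand of $A$). Because $A_e$ is an associative algebra over a field of characteristic zero and any element of its Jacobson radical acts nilpotently, hence with zero trace, on $A$, nondegeneracy of $e_e$ forces the radical to vanish; thus $A_e$ is semisimple, i.e.\ separable.

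I expect the explicit evaluation of the categorical trace $\tau$, together with the check that the spherical structure contributes no vanishing scalars on the graded pieces, to be the main technical obstacle; once that identification is in hand, the first part is formal (grading plus ``nondegenerate $\Leftrightarrow$ $\hat e$ iso''), and the last part is the classical criterion that the regular trace form detects semisimplicity.
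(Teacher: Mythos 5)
Your proof is correct and follows essentially the same route as the paper's: since $\tau$ is a graded morphism it vanishes off $A_e$, so the pairing $\tau\circ\mu$ decomposes into the blocks $A_f\otimes A_{f^{-1}}\to k$ and nondegeneracy is checked blockwise. You supply substantially more detail than the paper's two-line argument --- in particular the reduction of categorical nondegeneracy to invertibility of the graded morphism $\hat e\colon A\to A^{\sve}$ (whose inverse is automatically a morphism in $\Z(G,\alpha)$), and the identification of $\tau|_{A_e}$ with the trace of left multiplication on $A$ followed by the radical-acts-nilpotently argument for semisimplicity of $A_e$ --- all of which is sound and is silently assumed in the paper.
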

\begin{proof}
Being a homomorphism of graded vector spaces, the standard trace map $\tau: A\to I$ is zero on $A_f$ for $f\not= e$. Hence the standard bilinear form is zero on $A_f\otimes A_g$ unless $fg=e$. In particular, the restriction of $\tau$ to $A_e$ makes it a separable algebra in the category of vector spaces.
\end{proof}

\subsection{Etale algebras in trivial degree and their modules}\lb{eatd}

We start with a well known (see for example \cite{ko}) description of etale $G$-algebras in $\Vect$. We give (a sketch of) the proof for completeness.
\begin{lem}\label{comgalg}
Etale $G$-algebras are function algebras on $G$-sets. Indecomposable $G$-algebras correspond to transitive $G$-sets.
\end{lem}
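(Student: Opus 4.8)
The plan is to combine the classical structure theory of commutative separable algebras over an algebraically closed field with the functoriality of the character-space construction, and then to track idempotents for the transitivity statement.

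First I would invoke the classical fact (see \cite{ko}) that over an algebraically closed field $k$ of characteristic zero a commutative algebra $A$ in $\Vect$ is separable in the sense of section 1.2 if and only if it is isomorphic to a finite product $k^{\times n}$ of copies of $k$; equivalently, $A$ is spanned by a complete system of orthogonal idempotents. Granting this, I set $X=\mathrm{Alg}_k(A,k)$, the finite set of unital algebra homomorphisms $A\to k$, and observe that evaluation furnishes a canonical isomorphism of algebras $A\overset{\sim}{\to}k(X)=\mathrm{Fun}(X,k)$, carrying the primitive idempotents of $A$ bijectively to the delta functions on the points of $X$.

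Second, I would promote this to $G$-algebras. An action of $G$ on $A$ by algebra homomorphisms induces, by precomposition, an action of $G$ on $X$, namely $(g\cdot\chi)(a)=\chi(g^{-1}.a)$ for $g\in G$ and $\chi\in X$. This makes $X$ a finite $G$-set and turns the evaluation isomorphism $A\cong k(X)$ into an isomorphism of $G$-algebras, where $k(X)$ carries the $G$-action induced by permuting $X$. Conversely, for any finite $G$-set $X$ the function algebra $k(X)$ is visibly a commutative separable $G$-algebra, and the assignments $A\mapsto X$ and $X\mapsto k(X)$ are mutually inverse up to the evident natural isomorphisms; this establishes the correspondence of the first statement.

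Finally I would address indecomposability. Since $A\cong k(X)$ is commutative, its idempotents are exactly the characteristic functions of subsets of $X$, and a $G$-invariant idempotent corresponds to a $G$-stable (orbit-closed) subset. A nontrivial decomposition $A\cong A'\times A''$ as $G$-algebras is the same datum as an orthogonal splitting $1=z'+z''$ of the unit into nonzero $G$-invariant idempotents, hence the same as a partition $X=X'\sqcup X''$ into two nonempty $G$-stable subsets; such a splitting exists precisely when $X$ has more than one $G$-orbit. Therefore $A$ is indecomposable as a $G$-algebra if and only if $G$ acts transitively on $X$, i.e. $X$ is a transitive $G$-set. The main obstacle I anticipate is the first step: checking that the trace-pairing notion of separability used here (via the spherical structure on $\Vect$, where $\tau$ computes the trace of left multiplication) agrees with the classical characterization $A\cong k^{\times n}$. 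Once that identification is in hand, the remainder is the standard Gelfand-type duality between finite $G$-sets and commutative separable $G$-algebras together with routine idempotent bookkeeping.
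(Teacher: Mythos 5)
Your proposal is correct and takes essentially the same route as the paper's own (brief) proof: identify a commutative separable algebra over the algebraically closed field $k$ with $k(X)$ via its minimal idempotents (equivalently, its characters), transport the $G$-action to $X$, and match $G$-algebra direct-sum decompositions with decompositions of $X$ into $G$-stable subsets, so that indecomposability is transitivity. Your extra care about reconciling the trace-pairing definition of separability with the classical characterization $A\cong k^{\times n}$ is a reasonable point, but it is standard in $\Vect$ and the paper likewise takes it for granted by citing \cite{ko}.
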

\begin{proof}
An etale algebra over an algebraically closed field $k$ is a function algebra $k(X)$ on a finite set $X$ (with elements of $X$ corresponding to minimal idempotents of the algebra). The $G$-action on the algebra amounts to a $G$-action on the set $X$. Obviously, the algebra of functions $k(X\cup Y)$ on the disjoint union of $G$-sets is the direct sum of $G$-algebras $k(X)\oplus k(Y)$ and any direct sum decomposition of $G$-algebras appears in that way.
\end{proof}

Let $k(X)$ be an indecomposable $G$-algebra. By choosing an element $p\in X$, we can identify the $G$-set $X$ with the set $G/H$ of cosets modulo the stabiliser subgroup $H=\mathrm{St}_G(p)$.

Let $\C(G,H,\alpha)$ be the category $\cC(F,G,\gamma,\beta,\alpha)$ with $(\gamma,\beta,\alpha)=\tau(\alpha)$ as defined in the appendix.

\begin{theo}\label{loc1}
Let $G$ be a finite group, and let $H\subset G$ be a subgroup.  The category $\Z(G,\alpha)_{k(G/H)}$ of right modules over the function algebra $k(G/H)$ in the Drinfeld center $\Z(G,\alpha)$ is equivalent, as a monoidal category, to the category $\C(G,H,\alpha)$.  
\nl
Moreover, the full subcategory $\Z(G,\alpha)_{k(G/H)}^\mathrm{loc}$ of local modules is equivalent, as a braided monoidal category, to the Drinfeld center $\Z(H,\alpha|_H)$.
\end{theo}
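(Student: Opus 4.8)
The plan is to prove both statements by a direct unpacking of the right $k(G/H)$-module structure, exploiting that $k(G/H)$ lives entirely in degree $e$ and carries the complete system of orthogonal idempotents $\{e_{xH}\}_{xH\in G/H}$, which the $G$-action permutes via $g.e_{xH}=e_{gxH}$. For the monoidal part, acting by these idempotents splits any right module $M$ as $M=\bigoplus_{xH\in G/H}M_{xH}$ with $M_{xH}=M.e_{xH}$. Since the action map is a morphism in $\Z(G,\alpha)$ it preserves the internal $G$-grading, and compatibility \eqref{ah} of the module action with the $\alpha$-projective $G$-action forces the latter to permute the summands, $g\colon M_{xH}\xrightarrow{\sim}M_{gxH}$. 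I would check that this data --- a family of $G$-graded vector spaces indexed by $G/H$ with transition isomorphisms twisted by $\alpha$ --- is exactly an object of $\C(G,H,\alpha)$, and that the relative tensor product $\otimes_{k(G/H)}$ is the fibrewise tensor product $(M\otimes_{k(G/H)}N)_{xH}=M_{xH}\otimes_k N_{xH}$ whose associator is governed by the cocycle data $\tau(\alpha)=(\gamma,\beta,\alpha)$; matching associators then yields the monoidal equivalence.

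For the braided statement I would first render the locality condition explicit. Using the braiding \eqref{br}, for $m\in M_f$ and $a\in k(G/H)=A_e$ the monodromy $c_{A,M}c_{M,A}$ sends $m\otimes a\mapsto m\otimes(f.a)$, so locality reads $\nu(m\otimes a)=\nu(m\otimes f.a)$. Applied to the idempotents $a=e_{yH}$ this forces $(M_{xH})_f=0$ unless $fxH=xH$, i.e. unless $f\in xHx^{-1}$. In particular the summand $M_H:=M_{eH}$ is supported on degrees lying in $H$ and is preserved by the stabiliser $H=\mathrm{St}_G(eH)$, hence is an $H$-graded vector space equipped with an $H$-action. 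The essential check here is that the twisting factor \eqref{tefa}, restricted to arguments in $H$, coincides with the corresponding factor for $\alpha|_H$, so that $M_H$ is genuinely an object of $\Z(H,\alpha|_H)$.

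I would then take $F(M)=M_H$ as the candidate equivalence and build its quasi-inverse by an induction construction exactly parallel to the functor $E$ of Lemma \ref{cat1}: given $N\in\Z(H,\alpha|_H)$, spread $N$ over the cosets $G/H$ using the $G$-action, with transition twists prescribed by $\alpha$, and observe that the resulting module satisfies the support condition above and is therefore local. Mutual invertibility up to natural isomorphism gives a $k$-linear equivalence. To upgrade to a monoidal functor, I would note that $\otimes_{k(G/H)}$, restricted to the $eH$-components, is the ordinary tensor product $M_H\otimes_k N_H$ carrying the twisted associator \eqref{ass} of $\Z(H,\alpha|_H)$; and to upgrade to a braided functor, I would compute the induced braiding on $\Z(G,\alpha)^{\mathrm{loc}}_{k(G/H)}$ from \cite{pa} on the $eH$-component and compare it with \eqref{br} for $\Z(H,\alpha|_H)$.

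The hard part will be this final braided compatibility. The induced braiding on local modules is defined through the relative tensor product together with the ambient braiding, and showing that on the $H$-component it collapses to the clean formula $c_{X,Y}(x\otimes y)=h.y\otimes x$ for $x\in X_h$ demands careful tracking of the twisting factors $\alpha(f|g,h)$ and $\alpha(f,g|h)$ and repeated use of the identities \eqref{pca}, so that all phases reduce to $\alpha|_H$ with no residual cocycle. A secondary technical point is confirming that the induction functor genuinely lands in local (rather than arbitrary) modules, which again reduces to the vanishing $(M_{xH})_f=0$ for $f\notin xHx^{-1}$.
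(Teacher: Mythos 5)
Your proposal is correct and takes essentially the same route as the paper's proof: the paper's functor $D(M)=Mp$ (multiplication by the minimal idempotent supported on the base coset $H$) is exactly your restriction to the fibre $M_{eH}$ with its residual $H$-action, its quasi-inverse $E$ is your induction over cosets, and the paper establishes locality by the same monodromy-plus-idempotent argument showing $mp=m(g.p)$ forces $\supp(Mp)\subseteq H$, after which $\C(H,H,\alpha)=\Z(H,\alpha|_H)$ by the appendix. The cocycle bookkeeping you flag as the hard part is likewise left as ``straightforward'' in the paper, the point being that $\C(G,H,\alpha)$ is by definition $\cC(H,G,\tau(\alpha))$ so no separate matching of twisting factors is needed.
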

\begin{proof}
We will exhibit the claimed equivalence of categories by constructing a pair of quasi-inverse functors
$$\xymatrix{\Z(G,\alpha)_{k(G/H)} \ar@/^15pt/[r]^{D} & \C(G,H,\alpha) \ar@/^15pt/[l]^{E} }$$
To define the first functor $D$, let us choose a minimal idempotent $p$ in the function algebra $k(G/H)$ to be the indicator function on $H$:
$$p:G\rightarrow k,\qquad p(x)=\left\{\begin{array}{cc}1,&x\in H\\0,&\mathrm{otherwise}\\\end{array}\right.$$
Define $D:\Z(G,\alpha)_{k(G/H)}\rightarrow\C(G,H,\alpha)$ by $D(M)=Mp.$
Since $p$ is of degree zero, $Mp$ is a $G$-graded vector space in a natural way: $\left(Mp\right)_g = \left(M_g\right)p$. The $G$-action on $M$ reduces to an $H$-action on $Mp$. This makes $D(M)$ an object of $\C(G,H,\alpha)$. Clearly $D$ is a functor.  
If $M$ and $N$ are objects of $\Z(G,\alpha)_{k(G/H)}$, then $$D\left(M\otimes_{k(G/H)}N\right) = \left(M\otimes_{k(G/H)}N\right)p = \left(M\otimes_{k(G/H)}N\right)p^2 = $$ $$ = \left(Mp\right)\otimes_{k(G/H)}\left(Np\right) =  D(M)\otimes D(N)\ .$$ Thus $D$ is a tensor functor.

The second functor $E$ requires more preparation.
For $V\in \C(G,H,\alpha)$ let $\mathrm{Map}\left(G,V\right)$ be the vector space of set-theoretic maps $G\to V$. Define $G$-grading on $\mathrm{Map}\left(G,V\right)$ by 
\begin{equation}\label{mapdeg}
\left|a\right|=f\in G\iff\left|a(x)\right|=x^{-1}fx \quad\forall x\in G,
\end{equation}
Define an $\alpha$-projective $G$-action on $\mathrm{Map}\left(G,V\right)$ as follows. For a homogeneous  $a\in \mathrm{Map}\left(G,V\right)$ of degree $|a|=f$, define $g.a:G\rightarrow V$ by 
$$\left(g.a\right)(x)=\alpha\left(x^{-1},g|f\right)^{-1}a\left(g^{-1}x\right).$$
It is straightforward to check that $\mathrm{Map}\left(G,V\right)$ is an object of $\Z(G,\alpha)$.
\nl
Now consider a subspace of $\mathrm{Map}\left(G,V\right)$ given by 
\beq\lb{e}
E(V) = \{a:G\to V|\ a(xh) = \alpha(h^{-1},x^{-1}|f)h^{-1}.a(x),\ h\in H, x\in G, |a|=f\},
\eeq 
for $V\in\C(G,H,\alpha)$. 
It also is not hard to see that $E(V)$ is an object of $\Z(G,\alpha)$.  
\nl
Moreover $E(V)$ is a right $k(G/H)$-module, with the action $\nu:E(V)\otimes E(k)\rightarrow E(V)$ in $\Z(G,\alpha)$ defined by $\nu\left(a\otimes b\right)=ab$, where $(ab)(x)=a(x)\cdot b(x)$ (here we use that $E(k)=k(G/H)$).  
This makes $E$ a functor $\C\left(G,H,\alpha\right)\to\Z\left(G,\alpha\right)_{k(G/H)}$. 
\nl 
For $V,W\in \C(G,H,\alpha)$ the universal property of tensor product gives an isomorphism 
$$E(V)\otimes_{k(G/H)}E(W) = E(V)\otimes_{ E(k)}E(W) \simeq E(V\ot W),$$ 
which shows that $E$ is a monoidal functor.

It remains to define the adjunction isomorphisms $\phi:D\circ E\to Id$ and $\psi:Id\to E\circ D$.  
For $V\in\C(G,H,\alpha)$, define a map $\phi_V: E(V)p\rightarrow V$ by $\phi\left(ap\right)=a(e)$, where $e\in G$ is the identity element.  
For  $M\in\Z(G,\alpha)_{k(G/H)}$ define a map $\psi_M:M\rightarrow E(Mp)$ by $\psi\left(m\right)(x)=\left(x^{-1}.m\right)p$, $x\in G$.
It is a direct task to check that these are natural isomorphisms of functors.

Finally, we address the locality statement. 
For an object $V\in\Z(G,\alpha)$, denote by $\mathrm{supp}(V) = \{g\in G |\ V_g\not= 0\ \}$ the {\em support} of $V$. 
Let $M$ be a right $k(G/H)$-module, and $p$ be as above.  The support of $D(M)=Mp$ is a subset of $H$.
Indeed the locality condition implies (and is equivalent to the fact) that for $m\in M_g$ one has $mp=m(g.p)$. Hence $mp=mp^2=m((g.p)p)$, which for non-zero $m$ implies that $g.p=p$.
Note that the full subcategory of $\C(G,H,\alpha)$ of objects with support in $H$ is $\C(H,H,\alpha) = \Z(H,\alpha|_H)$.
Thus the restriction of $D$ to $\Z(G,\alpha)_{k(G/H)}^\mathrm{loc}$ lands in $\Z(H,\alpha|_H)$. It is straightforward to see that this restriction is braided.
\end{proof}

\bre Applied to (etale) algebras, the functor $E$ from the proof of theorem \ref{loc1} is {\em transfer},
defined in \cite{tu}. The transfer turns an etale algebra from $\cZ\left(H,\alpha|_H\right)$ into an algebra from $\Z(G,\alpha)$.
Indeed, by theorem \ref{loc1} for an etale algebra $B$ from $\cZ\left(H,\alpha|_H\right)$ the transfer $E(B)$ is an  etale algebra in ${\Z(G,\alpha)}^\mathrm{loc}_{k(G/H)}$, which by lemma \ref{algloc} is an etale algebra in $\Z(G,\alpha)$ containing $k(G/H)$.
\ere

\begin{corr}\label{trans}
For a simple separable algebra $A$ in $\Z(G,\alpha)$, there is a subgroup $H\subset G$ such that $A \simeq E(B)$, where $B$ is a simple separable algebra in $\cZ\left(H,\alpha|_H\right)$ with $B_e=k$.
\end{corr}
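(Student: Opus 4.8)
The plan is to reduce the statement to theorem~\ref{loc1} by first identifying the trivial-degree component $A_e$. By proposition~\ref{sep} the algebra $A_e$ is separable, and as the degree~$e$ part of a commutative algebra it is commutative and carries a genuine $G$-action by algebra automorphisms (as established above). Thus $A_e$ is an etale $G$-algebra, so by lemma~\ref{comgalg} it is the function algebra on a finite $G$-set. The crux is to prove that this $G$-set is transitive, that is, $A_e\cong k(G/H)$ for the stabiliser $H$ of a point; granting this, $A$ becomes an algebra over $k(G/H)$ and the equivalence of theorem~\ref{loc1} finishes the argument.

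To prove indecomposability of $A_e$ as a $G$-algebra, I would first record that $A_e$ is central in $A$: taking $a=p\in A_e$ (of degree $e$) in the commutativity condition \eqref{co} gives $pb=(e.b)p=bp$ for every $b$. Now a splitting $A_e=B_1\times B_2$ into nonzero $G$-algebras amounts to a pair of orthogonal, $G$-invariant idempotents $\epsilon_1,\epsilon_2\in A_e$ with $\epsilon_1+\epsilon_2=1$. Since the $\epsilon_i$ have degree $e$ and are $G$-invariant, the $A\epsilon_i$ are sub-objects of $A$ in $\Z(G,\alpha)$, and since the $\epsilon_i$ are central idempotents one gets a nonzero algebra product $A\cong A\epsilon_1\times A\epsilon_2$. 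The projection onto a factor is then a surjective, non-injective algebra homomorphism, contradicting simplicity of $A$. Hence $A_e$ is indecomposable and lemma~\ref{comgalg} yields $A_e\cong k(G/H)$.

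The inclusion $k(G/H)=A_e\hookrightarrow A$ presents $A$ as an algebra in the module category $\Z(G,\alpha)_{k(G/H)}$, and commutativity makes it local: with $p$ the indicator idempotent on $H$ and $a\in A_g$, \eqref{co} gives $ap=(g.p)a$, while centrality of the degree~$e$ element $g.p$ gives $(g.p)a=a(g.p)$, so $ap=a(g.p)$, which is precisely the locality criterion isolated in the proof of theorem~\ref{loc1}. Therefore $A$ lies in $\Z(G,\alpha)_{k(G/H)}^{\mathrm{loc}}$, which by theorem~\ref{loc1} is braided monoidally equivalent to $\Z(H,\alpha|_H)$ via the quasi-inverse functors $D$ and $E$. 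Putting $B:=D(A)=Ap$, the natural isomorphism $\psi$ of theorem~\ref{loc1} gives $A\cong E(B)$; as $D$ is a monoidal equivalence, $B$ is simple, it is separable by lemma~\ref{algloc}, and $B_e=(A_e)p=k(G/H)p\cong k$.

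I expect the indecomposability step of the second paragraph to be the main obstacle, since it is the only point where simplicity of $A$ is genuinely used: it hinges on the braiding being trivial in degree $e$, so that $A_e$ is central and its $G$-invariant idempotents split $A$ as an algebra. Everything past the identification $A_e\cong k(G/H)$ is a mechanical application of theorem~\ref{loc1} and lemma~\ref{algloc}.
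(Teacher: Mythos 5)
Your proof is correct and follows essentially the same route as the paper's: identify $A_e\cong k(G/H)$ via lemma \ref{comgalg}, view $A$ as a (local) algebra over $k(G/H)$, and apply theorem \ref{loc1} to obtain $A\simeq E(Ap)$ with $B=Ap$. The only differences are that you spell out the indecomposability of $A_e$ via central $G$-invariant idempotents (a step the paper asserts without argument) and verify the locality criterion directly rather than citing lemma \ref{algloc}; both are sound.
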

\begin{proof}
The subalgebra $A_e$ is an indecomposable commutative $G$-algebra. By lemma \ref{comgalg}, it is isomorphic to $k(X)$ for some transitive $G$-set $X$. By lemma \ref{algloc}, $A$ is a commutative algebra in ${\Z(G,\alpha)}^\mathrm{loc}_{A_e}$. Thus, by theorem \ref{loc1}, $A$ is the transfer of the etale algebra $B=Ap$ from $\cZ\left(H,\alpha|_H\right)$ (here $p$ is the minimal idempotent of $A_e$, corresponding to an element of $X$, with the stabilizer $H=\mathrm{St}_G(p)$). Finally, $B_e = A_{e}p = k$ by minimality of $p$.
\end{proof}

\subsection{Etale algebras trivial in trivial degree}\lb{eatt}

Here we describe simple etale algebras $B$ in $\cZ\left(H,\beta\right)$ with $B_e=k$.
\begin{lem}
Let $B$ be a separable algebra in $\cZ\left(H,\alpha|_H\right)$ such that $B_e=k$. Then $$dim\left(B_h\right)\leq 1,\quad \forall h\in H.$$
Moreover, the support $F$ of $B$ is a normal subgroup of $H$.
\end{lem}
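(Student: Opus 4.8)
The plan is to read everything off the separability criterion of Proposition~\ref{sep}, combined with $\beta$-associativity (writing $\beta := \alpha|_H$) and the hypothesis $B_e = k$. The latter turns the canonical pairing of Proposition~\ref{sep} into an ordinary bilinear form
$$\langle\,,\rangle : B_h\otimes B_{h^{-1}}\to B_e = k,\qquad \langle a,b\rangle = ab,$$
which separability asserts to be nondegenerate for every $h\in H$. Everything will follow from playing this nondegeneracy against associativity, once one-dimensionality is in hand.

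For the dimension bound I would argue as follows. Take $a,c\in B_h$ and $b\in B_{h^{-1}}$; since $ab,bc\in B_e=k$ are scalars, $\beta$-associativity $a(bc)=\beta(h,h^{-1},h)(ab)c$ rewrites as the identity
$$\langle b,c\rangle\, a = \beta(h,h^{-1},h)\,\langle a,b\rangle\, c$$
inside the single graded piece $B_h$. Assuming $B_h\neq 0$, nondegeneracy lets me fix $a\neq 0$ together with some $b$ satisfying $\langle a,b\rangle\neq 0$; the displayed relation then exhibits an arbitrary $c\in B_h$ as a scalar multiple of $a$, so $\dim B_h\le 1$.

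Next I would show that the support $F$ is a subgroup. Membership $e\in F$ is the hypothesis $B_e=k$, and closure under inversion is immediate, since nondegeneracy of $\langle\,,\rangle$ forces $B_{h^{-1}}\neq 0$ whenever $B_h\neq 0$. For closure under products, given $g,h\in F$ I would choose nonzero generators $u_g\in B_g$, $u_h\in B_h$, $u_{g^{-1}}\in B_{g^{-1}}$ of these (now one-dimensional) lines. Nondegeneracy of the pairing $B_{g^{-1}}\otimes B_g\to k$ gives $u_{g^{-1}}u_g\neq 0$ in $k$, and $\beta$-associativity yields
$$u_{g^{-1}}(u_g u_h) = \beta(g^{-1},g,h)\,(u_{g^{-1}}u_g)\,u_h.$$
The right-hand side is a nonzero scalar multiple of $u_h$, hence the left-hand side is nonzero; in particular $u_g u_h\neq 0$, so $B_{gh}\neq 0$ and $gh\in F$.

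Finally, normality is the easy part: each $f\in H$ acts on $B$ by an automorphism carrying $B_g$ onto $B_{fgf^{-1}}$, so $f$ restricts to a linear isomorphism $B_g\xrightarrow{\sim}B_{fgf^{-1}}$; thus $B_g\neq 0$ iff $B_{fgf^{-1}}\neq 0$, i.e.\ $F$ is stable under conjugation by $H$, which is precisely normality. The step I expect to be the genuine obstacle is closure of $F$ under multiplication: unlike the remaining group axioms it does not follow from nondegeneracy alone, and it requires the associativity trick of cancelling $g$ against $u_{g^{-1}}$ to see that $u_g u_h$ cannot vanish.
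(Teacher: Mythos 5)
Your proof is correct and follows essentially the same route as the paper's: the dimension bound via playing $\alpha$-associativity of $a(bc)=(ab)c$ against nondegeneracy of the pairing $B_h\otimes B_{h^{-1}}\to B_e=k$, and closure of the support under products via invertibility (up to scalar) of a generator of $B_{g^{-1}}$. You merely spell out the subgroup axioms and the normality step (via the $H$-action permuting graded components), which the paper leaves implicit.
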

\begin{proof}
By proposition \ref{sep}, an algebra $B$, such that $B_e=k$, is separable iff the multiplication defines the
non-degenerate pairing $m:B_g\otimes B_{g^{-1}}\to A_e = k$. Thus, $\alpha$-associativity of multiplication implies
that, for any $a,c\in B_g$ and $b\in B_{g^{-1}}$ $$\alpha(g,g^{-1},g)m(a,b)c = am(b,c).$$ For non-zero $a,c$, choosing
$b$ such that $m(a,b),m(b,c)\not=0$, we get that $a$ and $c$ are proportional.
\newline
Now, it follows from the non-degeneracy of $m:B_g\otimes B_{g^{-1}}\to A_e = k$, that a generator of a non zero $B_f$
is invertible. Thus, for non-zero components $B_f,B_g$ the product $B_fB_g$ is also non-zero.
\end{proof}

Let $F\vartriangleleft H$ be a normal subgroup and $\gamma\in C^2(F,k^*)$ be a cochain, such that $d(\gamma) = \alpha|_F$.
Denote by $k[F,\gamma]$ an $H$-graded $\alpha$-associative algebra with the basis $e_f, f\in F$, graded as $|e_f| = f$,
and with multiplication defined by $e_fe_g = \gamma(f,g)e_{fg}$.
\begin{prop}\label{ttd}
An indecomposable separable algebra $B$ in $\Z(H,\alpha|_H)$ with $B_e=k$ has the form $k[F,\gamma]$ with the
$\alpha$-projective $H$-action given by: $$h(e_f) = \varepsilon_h(f)e_{hfh^{-1}},$$ for some $\varepsilon:H\times F\to
k^*$ satisfying
\begin{equation}\label{eps1}
\varepsilon_{gh}(f)  =  \varepsilon_g(hfh^{-1})\varepsilon_h(f)\alpha(f|g,h),\quad g,h\in H, f\in F
\end{equation}
\begin{equation} \label{eps2}
\gamma(f,g)\varepsilon_h(fg)  =  \alpha(f,g|h)\varepsilon_h(f)\varepsilon_h(g)\gamma(hfh^{-1}hgh^{-1}),\quad h\in H, f,g\in F
\end{equation}
The algebra $k[F,\gamma]$ is commutative (and hence etale) if
\begin{equation}\label{eps3}
\gamma(f,g)  =  \varepsilon_f(g)\gamma(fgf^{-1},f),\quad f,g\in F.
\end{equation}
\end{prop}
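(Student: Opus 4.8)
The plan is to read off both the algebra and the action from the one-dimensionality of the homogeneous components of $B$, and then to translate each of the three structural axioms of $\Z(H,\alpha|_H)$ — projectivity of the action \eqref{pa}, its compatibility with multiplication \eqref{ah}, and (for the last clause) the commutativity criterion \eqref{co} — into a scalar identity by comparing coefficients on basis vectors.

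First I would invoke the preceding lemma: since $B$ is separable with $B_e=k$, each homogeneous component $B_f$ is at most one-dimensional and the support $F=\supp(B)$ is a normal subgroup of $H$. For each $f\in F$ fix a basis vector $e_f\in B_f$. Grading of the multiplication forces $e_fe_g=\gamma(f,g)e_{fg}$ for scalars $\gamma(f,g)\in k$; nondegeneracy of the pairing $B_f\otimes B_{f^{-1}}\to k$ from proposition \ref{sep} (equivalently, invertibility of each $e_f$, as in the proof of the previous lemma) shows $\gamma(f,g)\in k^*$, so $\gamma\in C^2(F,k^*)$. Substituting the $e_f$ into the $\alpha$-associativity relation $a(bc)=\alpha(f,g,h)(ab)c$ and cancelling the invertible basis vector in degree $fgh$ yields exactly $d(\gamma)=\alpha|_F$; in particular $\alpha|_F$ is automatically a coboundary on $F$. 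This identifies $B$ with $k[F,\gamma]$ as a graded $\alpha$-associative algebra.

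Next I would pin down the action. Because the action is grading-shifting, $h.B_f=B_{hfh^{-1}}$, and as both spaces are one-dimensional there are scalars $\varepsilon_h(f)\in k^*$ (invertible since $h$ acts invertibly) with $h.e_f=\varepsilon_h(f)e_{hfh^{-1}}$. Feeding $e_f$ into the $\alpha$-projective axiom \eqref{pa} for the acting pair $g,h$ and comparing the coefficients of the common basis vector $e_{(gh)f(gh)^{-1}}$ produces the twisted $2$-cocycle condition \eqref{eps1}; feeding $e_f,e_g$ into the action–multiplication compatibility \eqref{ah} and comparing coefficients of $e_{h(fg)h^{-1}}$ produces \eqref{eps2}. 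For the commutativity clause I would substitute $a=e_f$, $b=e_g$ into \eqref{co}, giving $\gamma(f,g)e_{fg}=(f.e_g)\,e_f=\varepsilon_f(g)\gamma(fgf^{-1},f)e_{fg}$, whence \eqref{eps3}; since $B$ is separable by hypothesis, this condition makes it commutative and therefore etale. (The converse — that any such $\gamma,\varepsilon$ assemble into an object of $\Z(H,\alpha|_H)$ — is the same coefficient computation run backwards.)

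The routine part is each coefficient comparison; the genuine work, and the main obstacle, is the cocycle bookkeeping. One must verify that the twist factors delivered by \eqref{pa} and \eqref{ah} are precisely the mixed functions $\alpha(\ \cdot\ ,\ \cdot\ |\ \cdot\ )$ and $\alpha(\ \cdot\ |\ \cdot\ ,\ \cdot\ )$ of \eqref{tefa}–\eqref{acfa} appearing in \eqref{eps1}–\eqref{eps2}, using the normalisation of $\alpha$ and the derived relation $d(\gamma)=\alpha|_F$, and that the identities \eqref{pca} guarantee the pair $\{\gamma,\varepsilon\}$ is internally consistent, so the defining relations hold simultaneously rather than merely pairwise. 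Keeping the conjugation actions ${}^g h=ghg^{-1}$ correctly aligned inside these three-variable cocycle expressions is exactly the elaborate twisted-case bookkeeping flagged in the introduction.
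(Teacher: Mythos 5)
Your proposal is correct and follows essentially the same route as the paper: the paper's proof is precisely the coefficient comparison on the basis vectors $e_f$ for the three axioms \eqref{pa}, \eqref{ah} and \eqref{co}, with the identification $B\simeq k[F,\gamma]$ and $d(\gamma)=\alpha|_F$ supplied by the preceding lemma and the definition of $k[F,\gamma]$ just before the statement. Your write-up merely makes that preliminary identification explicit within the proof itself.
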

\begin{proof}
 Indeed, $\alpha$-projectivity of the action requires that
$(gh)(e_f) = \varepsilon_{gh}(f)e_{ghfh^{-1}g^{-1}}$ coincides with $$\alpha(f|g,h)g(h(e_f)) =
\alpha(f|g,h)\varepsilon_h(f)\varepsilon_g(hfh^{-1}e_{ghfh^{-1}g^{-1}},$$ which gives the first identity.
Multiplicativity of the action amounts to the equality between $$h(e_fe_g) =
\gamma(f,g)\varepsilon_h(fg)e_{hfgh^{-1}}$$ and $$\alpha(f,g|h)h(e_f)h(e_g) =
\alpha(f,g|h)\varepsilon_h(f)\varepsilon_h(g)\gamma(hfh^{-1},hgh^{-1})e_{hfgh^{-1}},$$ which gives the second identity.
Finally, commutativity implies that $e_fe_g = \gamma(f,g)e_{fg}$ is equal to
$$
f(e_g)e_f = \varepsilon_f(g)e_{fgf^{-1}}e_f = \varepsilon_f(g)\gamma(fgf^{-1},f)e_{fg}.
$$
\end{proof}
Denote by $k[F,\gamma,\varepsilon]$ the etale algebra in $\Z(H,\alpha|_H)$, defined in
proposition \ref{ttd}.
\begin{lem}
Two algebras $k[F,\gamma,\varepsilon]$ and $k[F',\gamma',\varepsilon']$ in the category $\Z(H,\alpha|_H)$ are
isomorphic iff there is a cochain $c:F\to k^*$ such that
\beq\lb{coalg}
c(fg)\gamma(f,g) = \gamma'(f,g)c(f)c(g),\quad \varepsilon_h(f)c(hfh^{-1}) = c(f){\varepsilon '}_h(g).
\eeq
\end{lem}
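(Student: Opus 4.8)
The plan is to show that any isomorphism in $\Z(H,\alpha|_H)$ between the two algebras is forced to be ``diagonal'' in the bases $e_f$ and $e'_f$, and then to read off the two conditions from the requirements that such a diagonal map be an algebra homomorphism and a morphism in $\Z(H,\alpha|_H)$.

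First I would record that an isomorphism forces $F=F'$. A morphism $\varphi$ in $\Z(H,\alpha|_H)$ preserves the $H$-grading, so it sends the degree $f$ component of $k[F,\gamma,\varepsilon]$, which is $k\,e_f$ for $f\in F$ and $0$ otherwise, into the degree $f$ component of $k[F',\gamma',\varepsilon']$, which is $k\,e'_f$ for $f\in F'$ and $0$ otherwise. If $\varphi$ is an isomorphism, injectivity forces $F\subseteq F'$ and applying the same argument to $\varphi^{-1}$ (or a dimension count) forces $F'\subseteq F$; hence $F=F'$. Since every nonzero graded component is one-dimensional, $\varphi$ must act as $\varphi(e_f)=c(f)\,e'_f$ for a unique scalar $c(f)\in k$, and invertibility of $\varphi$ is equivalent to $c(f)\in k^*$ for all $f\in F$. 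Thus isomorphisms correspond exactly to functions $c:F\to k^*$ for which the diagonal map is both an algebra homomorphism and compatible with the $\alpha$-projective $H$-action.

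Next I would extract the two identities. Applying $\varphi$ to $e_fe_g=\gamma(f,g)e_{fg}$ and comparing with $\varphi(e_f)\varphi(e_g)=c(f)c(g)\gamma'(f,g)e'_{fg}$ yields the first relation $c(fg)\gamma(f,g)=\gamma'(f,g)c(f)c(g)$; in particular, taking $f=g=e$ and using that $\gamma,\gamma'$ are normalized gives $c(e)=1$, so the unit is automatically preserved. Comparing $\varphi(h.e_f)=\varepsilon_h(f)c(hfh^{-1})\,e'_{hfh^{-1}}$ with $h.\varphi(e_f)=c(f)\varepsilon'_h(f)\,e'_{hfh^{-1}}$ yields the second relation $\varepsilon_h(f)c(hfh^{-1})=c(f)\varepsilon'_h(f)$. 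Conversely, given $c:F\to k^*$ satisfying both identities, these same computations show that $\varphi(e_f)=c(f)\,e'_f$ is a grading-preserving, $H$-action-preserving algebra homomorphism, hence a morphism of algebras in $\Z(H,\alpha|_H)$, and it is invertible because each $c(f)\neq 0$.

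There is essentially no hard step: once one notes that morphisms in $\Z(H,\alpha|_H)$ are merely grading- and action-preserving linear maps, the one-dimensionality of the graded pieces pins down the form of $\varphi$ completely, and the two defining conditions of an algebra isomorphism translate literally into the displayed relations. The only point requiring a little care is the preliminary observation that the supports must coincide, $F=F'$, which is precisely what makes a single cochain $c$ on $F$ the right object to parametrize isomorphisms.
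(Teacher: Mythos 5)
Your proof is correct and follows essentially the same route as the paper's: same supports force $F=F'$, one-dimensionality of the graded components forces the diagonal form $e_f\mapsto c(f)e_f$, and multiplicativity and $H$-equivariance translate directly into the two displayed conditions (you even silently correct the paper's typo $\varepsilon'_h(g)$ to $\varepsilon'_h(f)$). Your treatment is slightly more detailed than the paper's, but there is no substantive difference.
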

\begin{proof}
Isomorphic algebras in $\Z(H,\alpha|_H)$ have to have the same supports. Thus $F=F'$. Since the components of both
$k[F,\gamma,\varepsilon]$ and $k[F',\gamma',\varepsilon']$ are all one dimensional, an isomorphism
$k[F,\gamma,\varepsilon]\to k[F',\gamma',\varepsilon']$ has a form $e_f\mapsto c(f)e_f$ for some $c(f)\in k^*$.
Finally, multiplicativity of this mapping is equivalent to the first condition, while $H$-equivariance is equivalent to
the second.
\end{proof}

Note that $\varepsilon$ can be considered as an element of $C^1(H,C^1(F,k^*))$, while $\gamma$ lies naturally in
$C^2(F,k^*)=C^0(H,C^2(F,k^*))$. Thus, in the terminology of the appendix, $(\varepsilon,\gamma)$ is a 2-cochain of
$\tilde C^*(H,F,k^*)$. The equations (\ref{eps1}),(\ref{eps2}), together with the condition $d(\gamma)=\alpha|_F$, are
equivalent to the coboundary condition $d(\varepsilon,\gamma)=(\alpha_2,\alpha_1,\alpha_0)=\tau(\alpha)$ in $\tilde
C^*(H,F,k^*)$. The equations (\ref{coalg}) say that $(\varepsilon,\gamma)=d(c)(\varepsilon',\gamma')$ for $c\in
C^1(F,k^*)=\tilde C^1(H,F,k^*)$.

Before we describe local modules over the algebras, defined in proposition \ref{ttd}, we need to explain how the
cochains $\gamma,\varepsilon$, associated with them, allow to reduce the cocycle $\alpha\in Z^3(H,k^*)$ to
$\overline\alpha\in Z^3(H/F,k^*)$. It will be shown, in the course of the proof of theorem \ref{loc2}, that
$\overline\alpha(x,y,z)$ defined by
$$\alpha(s(x),s(y),s(y)^{-1}s(x)^{-1}s(xyz))\gamma(\tau(y,z),\tau(x,yz)))\gamma(\tau(y,z)\tau(x,yz),\tau'(x,y,z))\times$$
\begin{equation}\label{3coc}
\times\varepsilon_{s(xyz)^{-1}s(x)s(y)}(\tau(x,y))\gamma(\tau'(x,y,z)^{-1},\tau'(x,y,z))
\end{equation}
is a 3-cocycle of $H/F$. Here $s:H/F\to H$ is a section of the quotient map $H\to H/F$, $\tau(y,z) = s(z)^{-1}s(y)^{-1}s(yz)\in F$ and
$$\tau'(x,y,z) = s(xyz)^{-1}s(x)s(y)\tau(x,y)^{-1}s(y)^{-1}s(x)^{-1}s(xyz).$$

\begin{theo}\label{loc2}
The category ${\Z(H,\alpha|_H)}^\mathrm{loc}_{k[F,\gamma,\varepsilon]}$, of local right $k[F,\gamma,\varepsilon]$-modules in
$\Z(H,\alpha|_H)$, is equivalent, as a ribbon category, to $\\Z(H/F,{\overline\alpha})$.
\end{theo}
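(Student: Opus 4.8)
The plan is to proceed exactly as in the proof of theorem \ref{loc1}: describe the local modules explicitly, build a pair of quasi-inverse functors to $\Z(H/F,\overline\alpha)$ by choosing a section $s:H/F\to H$, and then check compatibility with the braiding and the twist.

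First I would unpack the structure of a local right $B$-module, $B=k[F,\gamma,\varepsilon]$. Since each homogeneous component $B_f=ke_f$ is one-dimensional and $e_f$ is invertible (the pairing $B_f\otimes B_{f^{-1}}\to k$ is non-degenerate by proposition \ref{sep}), right multiplication by $e_f$ gives isomorphisms $M_g\xrightarrow{\sim}M_{gf}$; hence all components of $M$ lying over a fixed coset $\bar g\in H/F$ have the same dimension and are canonically identified with one another. A short computation with the braiding \eqref{br} shows that the locality diagram is equivalent to the relation
$$m\cdot e_f = \varepsilon_g(f)\,\big({}^{g}\!f.m\big)\cdot e_{{}^{g}\!f},\qquad m\in M_g,\ f\in F,\ {}^{g}\!f = gfg^{-1}.$$
This is the key constraint: it ties the module action of $F$ to the $\alpha$-projective $H$-action restricted to $F$, and says that the braiding of $M$ with $B$ records only the coset of the degree, not the representative.

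Second I would fix a section $s:H/F\to H$ with $s(e)=e$ and define $\Phi:\Z(H,\alpha|_H)^{\mathrm{loc}}_{B}\to\Z(H/F,\overline\alpha)$ on objects by $\Phi(M)_{\bar x}=M_{s(\bar x)}$. The reduced $H/F$-action lets $\bar y$ act through $s(\bar y)$ and then corrects by the invertible module action of the unique $e_\phi$, $\phi=s({}^{\bar y}\bar x)^{-1}s(\bar y)s(\bar x)s(\bar y)^{-1}\in F$, that moves the result back to the representative component $M_{s({}^{\bar y}\bar x)}$. Checking that this action is $\overline\alpha$-projective, with $\overline\alpha$ exactly the expression \eqref{3coc}, is the heart of the matter: it is a direct but lengthy bookkeeping in the scalars $\varepsilon,\gamma$, using \eqref{eps1}, \eqref{eps2} and $d(\gamma)=\alpha|_F$, together with the section discrepancies $\tau,\tau'$. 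The same computation run on triples shows simultaneously that $\overline\alpha\in Z^3(H/F,k^*)$ and that a different section changes it by a coboundary. For the quasi-inverse I would use the transfer-type functor $\Psi$: set $\Psi(N)_g=N_{\bar g}$ for all $g$, let $e_f$ act by the identification $N_{\bar g}=N_{\overline{gf}}$ rescaled by $\gamma,\varepsilon$, and define the $H$-action from the $H/F$-action of $N$ together with the $\varepsilon$-factors; by construction $\Psi(N)$ satisfies the locality relation above. Verifying $\Phi\circ\Psi\cong\mathrm{Id}$ and $\Psi\circ\Phi\cong\mathrm{Id}$ is then routine.

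Finally I would check that $\Phi$ is braided and ribbon. For $m\in M_{s(\bar x)}$ the braiding \eqref{br} acts through $s(\bar x)$; the locality relation lets one absorb its $F$-part into the module action, leaving precisely the action of the coset $\bar x$, i.e. the braiding of $\Z(H/F,\overline\alpha)$. The twist of $\Z(G,\alpha)$ is given by the self-action of the degree, so it likewise descends to the twist of $\Z(H/F,\overline\alpha)$, giving a ribbon equivalence. One could avoid recomputing the braiding by combining theorem \ref{loc1} with lemma \ref{algloc} to reduce to an already established case, but it is the explicit section computation that yields the formula \eqref{3coc}. I expect the main obstacle to be exactly this cocycle reduction: proving that the scalar accumulated by $\Phi$ equals \eqref{3coc}, that it is a genuine $3$-cocycle, and that it is section-independent up to coboundary --- the elaborate computation the introduction warns about.
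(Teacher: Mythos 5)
Your skeleton coincides with the paper's: unpack the right $k[F,\gamma,\varepsilon]$-module structure as invertible operators $e_f:M_h\to M_{hf}$, translate locality into $me_f=\varepsilon_h(f)\,((hfh^{-1}).m)e_{hfh^{-1}}$ for $m\in M_h$, choose a section $s:H/F\to H$, set $V_x=M_{s(x)}$ with the $H/F$-action given by $s(y)$ followed by multiplication by the correcting element $e_{f(x,y)}$, and read off the associator \eqref{3coc} from the scalar by which the associativity constraint acts on $V_x\otimes(U_y\otimes W_z)$. The genuine divergence --- and the point where your plan as written would fail --- is the step ``checking that this action is $\overline\alpha$-projective'' together with the claim that the transported braiding is \emph{precisely} the braiding of $\Z(H/F,\overline\alpha)$. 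The multiplier of the projective $H/F$-action on $V$, the scalar rescaling the action on $V\otimes U$, and the coefficient $c(x,y)$ in the transported braiding are complicated expressions in $\gamma$, $\varepsilon$, $\alpha$ and the section discrepancies; there is no reason for them to coincide on the nose with the standard multipliers $\overline\alpha(x,y|z)$, $\overline\alpha(x|y,z)$ of \eqref{tefa}, \eqref{acfa} and the braiding \eqref{br} of $\Z(H/F,\overline\alpha)$, so a head-on verification will not close. What the transport gives a priori is only a braided category of the form $\cC(H/F,H/F,\gamma',\beta',\overline\alpha,c)$ in the sense of the appendix, with unknown $\gamma'$, $\beta'$, $c$. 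This is exactly why the paper declines to compute the multiplier and instead invokes proposition \ref{chace}: any braided category of that shape is gauge equivalent, hence ribbon equivalent, to $\Z(H/F,\overline\alpha)$, because the defect of $(\gamma',\beta',\overline\alpha)$ against $\tau(\overline\alpha)$ is the coboundary of the braiding coefficient. Either adopt that recognition tool or exhibit an explicit gauge transformation $T(b,a)$; without one of the two, the last third of your argument is a gap.

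Two smaller points. Your proposed shortcut of combining theorem \ref{loc1} with lemma \ref{algloc} does not apply here: $k[F,\gamma,\varepsilon]$ has trivial degree-$e$ part and is not of the form $k(G/H)$, so theorem \ref{loc1} addresses the complementary situation. And the cocycle property of $\overline\alpha$, as well as its independence of $s$ up to coboundary, need no separate computation: $\overline\alpha$ is by construction the associator of a monoidal category evaluated on graded lines, so the pentagon gives the cocycle identity and comparison of the transports for two sections gives the coboundary.
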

\begin{proof}
The structure of a right $k[F,\gamma,\varepsilon]$-module on an object $M=\oplus_{h\in H}M_h$ of $\Z(H,\alpha|_H)$
amounts to a collection of isomorphisms $e_f:M_h\to M_{hf}$ (right multiplication by $e_f\in k[F,\gamma,\varepsilon]$)
such that $$e_e = I,\quad e_fe_{f'} = \gamma(f,f')e_{f'f},\quad he_fh^{-1} = \varepsilon_h(f)e_{hfh^{-1}},\quad f,f'\in
F, h\in H.$$ Here $h:M_{h'}\to M_{hh'h^{-1}}$ is the $\alpha$-projective $H$-action on $M$. The
$k[F,\gamma,\varepsilon]$-module $M$ is local iff $e_f = \varepsilon_h(f)hfh^{-1}e_{hfh^{-1}}$ on $M_h$. Indeed, the
double braiding in $\Z(H,\alpha|_H)$ transforms an element $m\otimes e_f\in M\otimes A$ (with $m\in M_h$) as follows
$$m\otimes e_f\mapsto h(e_f)\otimes m=\varepsilon_h(f)e_{hfh^{-1}}\otimes m\mapsto\varepsilon_h(f)hfh^{-1}(m)\otimes
e_{hfh^{-1}}.$$ An equivalent way of expressing the locality condition is the following:
$$f=\varepsilon_h(h^{-1}fh)^{-1}\gamma(h^{-1}fh,f^{-1})\gamma(f,f^{-1})^{-1}e_{h^{-1}fhf^{-1}}=\epsilon_h(f)e_{[h^{-1},f]}.$$
\newline
Now let $s:H/F\to H$ be a section of the quotient map $H\to H/F$. For a $k[F,\gamma,\varepsilon]$-module $M$ define a
$H/F$-graded vector space $V$ by $V_x = M_{s(x)}$, where $x\in H/F$. For local $M$ the vector space $V$ can be equipped
with a projective $H/F$-action: for $y\in H/F$ define $y:V_x\to V_{yxy^{-1}}$ as the composition
$$\xymatrix{ V_x=M_{s(x)} \ar[r]^(.4){s(y)} & M_{s(y)s(x)s(y)^{-1}} \ar[r]^(.4){e_{f(x,y)}} & M_{s(yxy^{-1})}=V_{yxy^{-1}}, }$$
where $$f(x,y)=s(y)s(x)^{-1}s(y)^{-1}s(yxy^{-1})={^{s(y)}{s(x)}^{-1}}s({^{y}{x}})\in F.$$ To compute the projective
multiplier one would need to compare $zy$ on $V_x$ with the composition of $y$ and $z$. This can be done with the help
of the following diagram:
$$\xymatrix{V_x \ar[rr]^y \ar@{=}[ddd] && V_{^{y}{x}} \ar[rr]^z \ar@{=}[d] && V_{^{zy}{x}} \ar@{=}[ddd] \\
&& M_{s({^{y}{x}})} \ar[rd]^{s(z)} &&  \\
 & M_{^{s(y)}{s(x)}} \ar[ru]^{e_{f(x,y)}} \ar[rd]^{s(z)} && M_{^{s(z)}{s({^{y}{x}})}} \ar[rd]^{e_{f({^{y}{x}},y)}} \\
M_{s(x)} \ar[ur]^{s(y)} \ar[rr]^{s(z)s(x)} \ar@/_10pt/[drr]_{s(zy)}  && M_{^{s(z)s(y)}{s(x)}} \ar[ru]^{e_{^{s(z)}{f(x,y)}}}
\ar[rr]^{e_{g(x,y,z)}} && M_{s({^{zy}{x}})}\\
 && M_{^{s(zy)}{s(x)}} \ar@/^10pt/[u]^{\sigma(z,y)} \ar@/_10pt/[u]_{e_{[{^{s(zy)}{s(x)}}^{-1},\sigma(z,y)]}} \ar@/_10pt/[urr]_{f(x,zy)}\\
V_x \ar@{=}[uu] \ar[rrrr]^{zy} &&&& V_{^{zy}{x}} \ar@{=}[uu]
}$$ Here $\sigma(z,y) = s(z)s(y)s(zy)^{-1}\in F$ and $$g(x,y,z)={^{s(z)}{f(x,y)}}f({^{y}x},z)={^{s(z)s(y)}{s(x)}^{-1}}s({^{zy}{x}})$$
so that
$$[{^{s(zy)}{s(x)}}^{-1},\sigma(z,y)]{^{s(z)s(y)}{s(x)}^{-1}}s({^{zy}{x}})$$ coincides with
$${^{s(zy)}{s(x)}^{-1}}s({^{zy}{x}})=f(x,zy).$$
The cells of the diagram commute up to multiplication by a scalar (except two top and one bottom cells, which commute
on the nose). Carefully gathering the scalars one can write down the multiplier for the projective $H/F$-action on $V$.
Fortunately, we do not have to do it. In view of the proposition \ref{chace} from the appendix, it is enough to know
that such a multiplier exists.
\newline
The $H/F$-graded vector space $V\otimes U$, corresponding to the tensor product $M\otimes_{k[F,\gamma,\varepsilon]}N$
of local $B=k[F,\gamma,\varepsilon]$-modules, can be identified with the graded tensor product of $V$ and $U$. Indeed,
the composition
$$\xymatrix{\bigoplus\limits_{yz=x}M_{s(y)}\otimes N_{s(z)} \ar[rr]^(.4){1\otimes e_{\tau(y,z)}} && \bigoplus\limits_{fg=s(x)}M_f\otimes N_g = (M\otimes N)_{s(x)} \ar[r]^(.65){pr} &  (M\otimes_BN)_{s(x)}  }$$ defines an isomorphism
$\bigoplus_{yz=x}V_y\otimes U_z\to (V\otimes U)_x$. Here, as before, $\tau(y,z) = s(z)^{-1}s(y)^{-1}s(yz)\in F$. Again
we will use a diagrammatic language to prove the compatibility (up to a scalar) of the $H/F$-action on $V\otimes U$
with the $H/F$-actions on $V$ and $U$:
$$\xymatrix{(V\otimes U)_x \ar[rr]^y \ar@{=}[d] && (V\otimes U)_{yxy^{-1}} \ar@{=}[d] \\
(M\otimes_BN)_{s(x)} \ar[r]^{s(y)} & (M\otimes_BN)_{s(y)s(x)s(y)^{-1}} \ar[r]_{e_{f(x,y)}} &
(M\otimes_BN)_{s(yxy^{-1})}\\ (M\otimes N)_{s(x)} \ar[u]_{pr} \ar[r]^{s(y)} \ar@{=}[d] & (M\otimes
N)_{s(y)s(x)s(y)^{-1}} \ar[u]_{pr} \ar@{=}[d] & (M\otimes N)_{s(yxy^{-1})} \ar[u]_{pr} \ar@{=}[d]\\
\bigoplus\limits_{fg=s(x)}M_f\otimes N_g \ar[r]^(.4){s(y)\otimes s(y)} &
\bigoplus\limits_{fg=s(x)}M_{s(y)fs(y)^{-1}}\otimes N_{s(y)gs(y)^{-1}} \ar[r]^(.64){e_{f(v,y)}\otimes e_{h(v,y,g)}} &
\bigoplus\limits_{f'g'=s(yxy^{-1})}M_{f'}\otimes N_{g'}\\ \bigoplus\limits_{vu=x}M_{s(v)}\otimes N_{s(u)}
\ar[u]_{1\otimes e_{\tau(v,u)}} \ar[r]^(.38){s(y)\otimes s(y)} \ar@{=}[d] &
\bigoplus\limits_{vu=x}M_{s(y)s(v)s(y)^{-1}}\otimes N_{s(y)s(u)s(y)^{-1}} \ar[u]_{1\otimes e_{s(y)\tau(v,u)s(y)^{-1}}}
& \bigoplus\limits_{vu=x}M_{s(yvy^{-1})}\otimes N_{s(yuy^{-1})} \ar@{=}[d] \ar[u]^{1\otimes
e_{\tau(yvy^{-1},yuy^{-1})}}\\ \bigoplus\limits_{vu=x}V_v\otimes U_u \ar[rd]_{y\otimes y}  &
\bigoplus\limits_{vu=x}M_{s(yvy^{-1})}\otimes N_{s(yuy^{-1})} \ar@{=}[d] \ar[u]_{e_{f(v,y)}^{-1}\otimes
e_{f(u,y)}^{-1}} & \bigoplus\limits_{vu=x}V_{yvy^{-1}}\otimes U_{yuy^{-1}} \\ &
\bigoplus\limits_{vu=x}V_{yvy^{-1}}\otimes U_{yuy^{-1}} \ar@{=}[ru] }$$ Here
$h(v,y,g)=s(y)gs(y)^{-1}f(v,y)^{-1}s(y)g^{-1}s(y)^{-1}$. Again, the cells of the diagram commute up to scalars (one for
each $v$ and $u$), resulting in a overall factor, rescaling $y\otimes y$ into $y$ on $V\otimes U$. Note that the six
vertex cell in the right half of the diagram commutes by the following property of the projection map: for any $u\in F$
the diagram
$$\xymatrix{ & (M\otimes_BN) \\ (M\otimes N) \ar[ru]^{pr} \ar@{=}[d] && (M\otimes N) \ar[lu]_{pr} \ar@{=}[d]\\
\bigoplus\limits_{f,g}M_f\otimes N_g \ar[rr]_{\oplus e_u\otimes e_{g^{-1}u^{-1}g}} &&
\bigoplus\limits_{f',g'}M_{f'}\otimes N_{g'} }$$ commutes up to multiplication by scalars (one for each $f$ and $g$).
So far, their actual form has been unimportant, but it will become crucial in what we are going to do later. To calculate
this factor, we start with the identity
$$me_u\otimes n = \varepsilon_{h^{-1}}(u)(m\otimes ne_{h^{-1}uh}),$$ which follows from the definition of the tensor product over $B$.
Multiplying this identity with $e_{h^{-1}u^{-1}h}$ (from the right) we get
$$me_u\otimes ne_{h^{-1}u^{-1}h} = \varepsilon_{h^{-1}}(u)\gamma(h^{-1}uh,h^{-1}u^{-1}h)(m\otimes n).$$
\newline
Tha last step we need to make is to calculate the associator on $V\otimes(U\otimes W)$ and to prove that it is scalar
on $V_x\otimes(U_y\otimes W_z)$. Once again we apply diagrammatic technique:
$$\xymatrix@C=-45pt{V_x\otimes(U_y\otimes W_z) \ar@{=}[d] \ar[rr]^{\overline\alpha(x,y,z)} && (V_x\otimes U_y)\otimes W_z
\ar@{=}[d]\\ M_{s(x)}\otimes(N_{s(y)}\otimes L_{s(z)}) \ar[d]^{1\otimes 1\otimes e_{\tau(y,z)}} && (M_{s(x)}\otimes
N_{s(y)})\otimes L_{s(z)} \ar[d]^{1\otimes e_{\tau(x,y)}\otimes 1}\\ M_{s(x)}\otimes(N_{s(y)}\otimes
L_{s(y)^{-1}s(yz)}) \ar[d]^{1\otimes 1\otimes e_{\tau(x,yz)}} && (M_{s(x)}\otimes N_{s(x)^{-1}s(xy)})\otimes L_{s(z)}
\ar[d]^{1\otimes 1\otimes e_{\tau(xy,z)}}\\ M_{s(x)}\otimes(N_{s(y)}\otimes L_{s(y)^{-1}s(x)^{-1}s(xyz)})
\ar@{_{(}->}[dd] \ar[rd]^{a} && (M_{s(x)}\otimes N_{s(x)^{-1}s(xy)})\otimes L_{s(xy)^{-1}s(xyz)} \ar@{_{(}->}[dd]\\ &
(M_{s(x)}\otimes N_{s(y)})\otimes L_{s(y)^{-1}s(x)^{-1}s(xyz)} \ar[ur]_{*} \ar@{_{(}->}[d] & \\ M\otimes(N\otimes L)
\ar[d]^{pr} \ar[r]_{\alpha_{M,N,L}} & (M\otimes N)\otimes L \ar[d]^{pr} & (M\otimes N)\otimes L \ar[dl]^{pr} \\
M\otimes_B(N\otimes_B L) \ar[r]_{\alpha_{M,N,L}} & (M\otimes_B N)\otimes_B L & }$$ Here $a$ stands for the
multiplication by $\alpha(s(x),s(y),s(y)^{-1}s(x)^{-1}s(xyz))$, $$*=
\varepsilon_{s(xyz)^{-1}s(x)s(y)}(\tau(x,y))\gamma(\tau'(x,y,z)^{-1},\tau'(x,y,z))(1\otimes e_{\tau(x,y)}\otimes
e_{\tau'(x,y,z)}),$$ and again
$$\tau'(x,y,z) = s(xyz)^{-1}s(x)s(y)\tau(x,y)^{-1}s(y)^{-1}s(x)^{-1}s(xyz).$$
Now, composing the arrows of the top cell and comparing the coefficients gives the formula (\ref{3coc}). Finally, by proposition \ref{chace} from the Appendix, the category ${\Z(H,\alpha|_H)}^\mathrm{loc}_{k[F,\gamma,\varepsilon]}$ is ribbon equivalent to $\Z(H/F,{\overline\alpha})$.
\end{proof}

\begin{corr}\label{lagr1}
An etale algebra $B=k\left[F,\gamma,\epsilon\right]$ in the category $\Z(H,\alpha|_H)$ is Lagrangian if and only if $F=H$.
\end{corr}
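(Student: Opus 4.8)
The plan is to read the corollary off directly from the identification of local modules established in Theorem \ref{loc2}. Recall that, since $\Z(H,\alpha|_H)$ is a $k$-linear braided monoidal category, an algebra $L$ is Lagrangian precisely when its category of local modules is equivalent to $\Vect_k$. First I would apply this criterion to $L=B=k[F,\gamma,\varepsilon]$: by Theorem \ref{loc2} the category $\Z(H,\alpha|_H)^\mathrm{loc}_{k[F,\gamma,\varepsilon]}$ is ribbon equivalent to $\Z(H/F,\overline\alpha)$. Hence $B$ is Lagrangian if and only if $\Z(H/F,\overline\alpha)\simeq\Vect_k$.

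It therefore remains to determine when a group-theoretical modular category $\Z(G',\beta)$ (here $G'=H/F$ and $\beta=\overline\alpha$) is equivalent to $\Vect_k$, and the key point is that this happens exactly when $G'$ is trivial. One direction is immediate: if $G'=\{e\}$ then $\Z(G',\beta)$ consists of vector spaces with trivial grading and trivial action, which is just $\Vect_k$. For the converse I would count simple objects using Corollary \ref{simpl}: the isomorphism classes of simple objects of $\Z(G',\beta)$ are indexed by conjugacy classes of pairs $(f,M)$ with $f\in G'$ and $M$ a simple module over the associated twisted centraliser algebra. By Lemma \ref{simpobj} the support of such a simple object is the conjugacy class $f^{G'}$, so simple objects supported on distinct conjugacy classes are non-isomorphic. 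If $G'$ is nontrivial it has at least two conjugacy classes, namely $\{e\}$ and the class of any non-identity element, whence $\Z(G',\beta)$ has at least two non-isomorphic simple objects and cannot be equivalent to $\Vect_k$.

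Combining the two steps yields $\Z(H/F,\overline\alpha)\simeq\Vect_k$ if and only if $H/F=\{e\}$, i.e. if and only if $F=H$, which is exactly the assertion of the corollary.

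The main (and essentially the only nontrivial) input is Theorem \ref{loc2}; once the category of local modules has been identified with $\Z(H/F,\overline\alpha)$, what is left is the elementary structural fact that a group-theoretical modular category is trivial only for the trivial group. I expect to present this as the short counting argument above, although a Frobenius--Perron dimension computation, using that $\Z(G',\beta)$ has total dimension $|G'|^2$, would serve equally well and is worth keeping in mind as the potential obstacle if the counting of simple objects is to be made fully precise.
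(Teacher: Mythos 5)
Your proposal is correct and follows the same route as the paper: apply Theorem \ref{loc2} to identify $\Z(H,\alpha|_H)^\mathrm{loc}_{k[F,\gamma,\varepsilon]}$ with $\Z(H/F,\overline\alpha)$ and then observe that this is $\Vect_k$ exactly when $H/F$ is trivial. The only difference is that you spell out (via the count of simple objects from Corollary \ref{simpl} and Lemma \ref{simpobj}) the final step that the paper treats as immediate, which is a harmless and correct elaboration.
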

\begin{proof}
By theorem \ref{loc2}, an etale algebra $B=k\left[F,\gamma,\epsilon\right]$ in $\Z(H,\alpha|_H)$ is Lagrangian if and only if $\mathcal{Z}\left(H/F,\overline{\alpha}\right)\simeq k$-$\Vect$,  {\em i.e.} if and only if the quotient group $H/F$ is trivial.
\end{proof}

Note that for $F=H$, $\epsilon$ is determined by $\gamma$  by  the equation \eqref{eps3}. 

\subsection{Etale algebras and their local modules}\lb{ealm}

In this section we combine the previous results on etale algebras in group-theoretical modular
categories and on their local modules.

Define 
\beq
A(H,F,\gamma,\varepsilon) = E(k[F,\gamma,\varepsilon])\ ,
\eeq
where $E:\Z(H,\alpha|_H)\to \Z(G,\alpha)_{k(G/H)}$ is the functor from the proof of theorem \ref{loc1}.

\begin{theo}\label{main}
An etale algebra in $\Z(G,\alpha)$ has the form $A(H,F,\gamma,\varepsilon)$, where $H\subset G$ is a subgroup, $F\unlhd H$ is a normal subgroup, $\gamma\in C^2(F,k^*)$ is a coboundary $d(\gamma) = \alpha|_F$ and $\varepsilon:H\times F\to k^*$ satisfies the conditions (\ref{eps1},\ref{eps2},\ref{eps3}).
\end{theo}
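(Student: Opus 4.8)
The plan is to assemble Theorem \ref{main} from the structural results already established, treating it as a synthesis rather than a fresh computation. The key prior input is corollary \ref{trans}: any simple (equivalently, by lemma \ref{is}, any indecomposable) separable algebra $A$ in $\Z(G,\alpha)$ is isomorphic to a transfer $E(B)$, where $H\subset G$ is the stabiliser subgroup arising from the transitive $G$-set underlying $A_e$, and $B$ is a simple separable algebra in $\Z(H,\alpha|_H)$ with $B_e=k$. Since etale means indecomposable, commutative, and separable, I would first record that commutativity of $A$ forces commutativity of $B$: the functor $E$ from the proof of theorem \ref{loc1} is a monoidal equivalence onto $\Z(G,\alpha)_{k(G/H)}$, and by lemma \ref{algloc} the correspondence between algebras in $\Z(G,\alpha)^\mathrm{loc}_{k(G/H)}\simeq\Z(H,\alpha|_H)$ and etale algebras in $\Z(G,\alpha)$ containing $k(G/H)$ preserves the commutative and separable properties in both directions. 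Thus $B$ is an etale algebra in $\Z(H,\alpha|_H)$ with $B_e=k$.

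Next I would invoke proposition \ref{ttd} to pin down the internal structure of $B$. That proposition says precisely that an indecomposable separable $B\in\Z(H,\alpha|_H)$ with $B_e=k$ must have the form $k[F,\gamma]$ for a normal subgroup $F\unlhd H$ (the support, shown normal in the preceding lemma) and a cochain $\gamma\in C^2(F,k^*)$ with $d(\gamma)=\alpha|_F$, equipped with an $\alpha$-projective $H$-action $h(e_f)=\varepsilon_h(f)e_{hfh^{-1}}$ where $\varepsilon:H\times F\to k^*$ satisfies \eqref{eps1} and \eqref{eps2}; the commutativity of $B$ is exactly condition \eqref{eps3}. Writing $B=k[F,\gamma,\varepsilon]$ in the established notation, the transfer gives $A\simeq E(k[F,\gamma,\varepsilon]) = A(H,F,\gamma,\varepsilon)$, which is the asserted form. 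Conversely, I would check that every datum $(H,F,\gamma,\varepsilon)$ satisfying the listed conditions does produce an etale algebra: $k[F,\gamma,\varepsilon]$ is etale in $\Z(H,\alpha|_H)$ by proposition \ref{ttd}, and applying the monoidal functor $E$ together with lemma \ref{algloc} yields an etale algebra $A(H,F,\gamma,\varepsilon)$ in $\Z(G,\alpha)$.

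The genuinely substantive point, and the one I expect to require the most care, is the passage from ``indecomposable'' to the simplicity hypothesis needed to invoke corollary \ref{trans}. One must confirm that the hypotheses align: corollary \ref{trans} is phrased for simple separable algebras, while the theorem speaks of etale (indecomposable commutative separable) ones. Here lemma \ref{is} does the work, guaranteeing that an indecomposable separable algebra in the spherical fusion category $\Z(G,\alpha)$ is automatically simple, so corollary \ref{trans} applies directly. The remaining obstacle is bookkeeping rather than conceptual: one should verify that the identification $A_e\simeq k(X)$ for a transitive $G$-set, the choice of base point $p$ with stabiliser $H$, and the resulting $B=Ap$ are all consistent with the normalisation conventions feeding into \eqref{eps1}--\eqref{eps3}. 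I would therefore present the proof as a short chain of citations with these compatibility checks made explicit, rather than reproving any of the underlying equivalences.
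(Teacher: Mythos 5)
Your proposal is correct and follows exactly the route the paper takes: its proof of theorem \ref{main} is literally the one-line citation ``follows from corollary \ref{trans} and lemma \ref{algloc}'', with proposition \ref{ttd} entering implicitly through the definition of $A(H,F,\gamma,\varepsilon)=E(k[F,\gamma,\varepsilon])$. You have simply made explicit the compatibility checks (lemma \ref{is} bridging indecomposable and simple, and the preservation of commutativity/separability under the equivalence of theorem \ref{loc1} and lemma \ref{algloc}) that the authors leave to the reader.
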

\begin{proof}
Follows from corollary \ref{trans} and lemma \ref{algloc}.
\end{proof}

\begin{theo}\label{local}
The category ${\Z(G,\alpha)}^{\mathrm{loc}}_{A(H,F,\gamma,\varepsilon)}$, of local right $A(H,F,\gamma,\varepsilon)$-modules in $\Z(G,\alpha)$, is equivalent, as a ribbon category, to $\cZ\left(H/F,\overline{\alpha}\right)$.
\end{theo}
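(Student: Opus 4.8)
The plan is to reduce the claim to the two local-module computations already in hand, using the functor $E$ and the composability of local modules from lemma \ref{algloc}. By definition $A(H,F,\gamma,\varepsilon) = E(k[F,\gamma,\varepsilon])$, and by theorem \ref{loc1} the functor $E:\Z(H,\alpha|_H)\to \Z(G,\alpha)_{k(G/H)}$ identifies $\Z(H,\alpha|_H)$ with the local module category $\Z(G,\alpha)^{\mathrm{loc}}_{k(G/H)}$ as braided monoidal categories. Under this identification $k[F,\gamma,\varepsilon]$ is an etale algebra in $\Z(H,\alpha|_H)\simeq \Z(G,\alpha)^{\mathrm{loc}}_{k(G/H)}$, and (as noted in the remark after theorem \ref{loc1}) its image $A(H,F,\gamma,\varepsilon)$ is the corresponding etale algebra in $\Z(G,\alpha)$ containing $k(G/H)$.

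The key step is then to invoke the iterated-locality equivalence from lemma \ref{algloc}. With $\cC = \Z(G,\alpha)$, $A = k(G/H)$, and $B = k[F,\gamma,\varepsilon]$ viewed as an algebra in $\cC_A$, that lemma supplies a braided monoidal equivalence
\[
\left(\cC^{\mathrm{loc}}_A\right)^{\mathrm{loc}}_{B}\ \simeq\ \cC^{\mathrm{loc}}_{B},
\]
where on the right $B$ is regarded as the algebra $A(H,F,\gamma,\varepsilon)=E(k[F,\gamma,\varepsilon])$ in $\Z(G,\alpha)$. Applying theorem \ref{loc1} to the outer category $\cC^{\mathrm{loc}}_A \simeq \Z(H,\alpha|_H)$, the left-hand side becomes $\Z(H,\alpha|_H)^{\mathrm{loc}}_{k[F,\gamma,\varepsilon]}$. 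Hence
\[
\Z(G,\alpha)^{\mathrm{loc}}_{A(H,F,\gamma,\varepsilon)}\ \simeq\ \Z(H,\alpha|_H)^{\mathrm{loc}}_{k[F,\gamma,\varepsilon]}
\]
as braided monoidal categories.

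Finally I would apply theorem \ref{loc2}, which already computes the inner local module category as $\Z(H/F,\overline\alpha)$ ribbon-equivalently. Composing the two equivalences yields $\Z(G,\alpha)^{\mathrm{loc}}_{A(H,F,\gamma,\varepsilon)}\simeq \Z(H/F,\overline\alpha)$, which is the assertion. The main obstacle I anticipate is bookkeeping at the level of \emph{ribbon} structure rather than merely braided monoidal structure: lemma \ref{algloc} as stated gives a braided monoidal equivalence, so I must check that the twist (and the underlying spherical/ribbon data) is also preserved under the iterated-locality identification, and that $E$ from theorem \ref{loc1} respects it. In the group-theoretical setting the ribbon structure is canonical and determined by the categorical dimensions and the double braiding, both of which are transported by a braided equivalence; so the compatibility should follow once the functors are matched on objects, but this is the point where care is needed. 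A secondary nuisance is verifying that the algebra structure transported by $E$ genuinely coincides with $A(H,F,\gamma,\varepsilon)$ and with the $B$-in-$\cC_A$ of lemma \ref{algloc}, i.e. that the two routes through the diagram of functors agree on algebras; this is essentially the content of the remark following theorem \ref{loc1}.
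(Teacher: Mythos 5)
Your proposal is correct and follows exactly the route the paper takes: its proof of this theorem is the one-line statement that it follows from theorems \ref{loc1} and \ref{loc2} together with lemma \ref{algloc}, which is precisely the chain of equivalences you spell out. Your additional care about the ribbon structure is a reasonable refinement that the paper itself leaves implicit.
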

\begin{proof}
Follows from theorems \ref{loc1} and \ref{loc2}, and lemma \ref{algloc}.
\end{proof}

Note that when $F=H$ the function $\varepsilon$ is completely determined by $\gamma$. Indeed, by the commutativity condition (\ref{eps3}) , one has
\beq\lb{atga}\epsilon_f(g) = \frac{\gamma\left(f,g\right)}{\gamma\left(fgf^{-1},f\right)}\ .
\eeq
\nl
Denote $A(H,H,\gamma,\varepsilon)$ by $L\left(H,\gamma\right)$. 
Theorems \ref{main} and \ref{local} allow us to describe Lagrangian algebras in group-theoretical modular categories in purely group-theoretical terms.  

\bco\label{Lagr}
A Lagrangian algebra $L\in\Z(G,\alpha)$ has the form $L(H,\gamma)$, where $H\subset G$ is a subgroup and $\gamma\in C^2(H,k^*)$ is a coboundary $d(\gamma) = \alpha|_H$.  
\eco
\begin{proof}
Follows from corollary \ref{lagr1} and theorems \ref{main} and \ref{local}.
\end{proof}

\bre
A Lagrangian algebra $L=L\left(H,\gamma\right)$ is completely characterised by the conditions
$$L_e\simeq k(G/H), \qquad D(L) = k\left[H,\gamma\right]\ \in\ \C(G,H,\alpha).$$
Here $D:\Z(G,\alpha)_{k(G/H)}\to\cC\left(G,H,\alpha\right)$ is the functor from the proof of theorem \ref{loc1}.
\ere

\bre
It follows from the corollary \ref{Lagr} that Lagrangian algebras in $\Z(G,\alpha)\boxtimes
\Z(G,\alpha^{-1})\simeq \cZ(G\times G,\alpha\times\alpha^{-1})$ correspond to pairs $(U,\gamma)$, where $U\subset
G\times G$ is a subgroup and $\gamma\in C^2(U,k^*)$ is a coboundary $d(\gamma) = (\alpha\times\alpha^{-1})|_U$. This
coincides with the parametrisation of module categories obtained in \cite{os}, which illustrates the fact  that the total centre defines a bijection between equivalence classes of indecomposable
module categories over $\Z(G,\alpha)$ and Lagrangian algebras in
$\Z(G,\alpha)\boxtimes\Z(G,\alpha^{-1})$.
\ere

\section{Full centre}\lb{fc}

Here we show that Lagrangian algebras in $\Z(G,\alpha)$ are full centres of separable indecomposable algebras in $\V(G,\alpha)$.

\subsection{Monoidal centre of $\V(G,\alpha)$}\lb{mcgs}

Denote by $\V(G,\alpha)$ the category of $G$-graded vector spaces with
the ordinary tensor product: $$(V\otimes U)_g = \bigoplus_{fh=g}V_f\otimes U_h$$ and the associativity constraint
$\phi_{V,U,W}:V\otimes(U\otimes W)\to (V\otimes U)\otimes W$, twisted by a 3-cocycle $\alpha\in Z^3(G,k^*)$:
$$\phi_{V,U,W}(v\otimes(u\otimes w)) = \alpha(f,g,h)(v\otimes u)\otimes w,\quad \forall v\in V_f,u\in U_g,w\in W_h.$$
Clearly, $\V(G,\alpha)$ is a fusion category with the set of simple objects $\mathrm{Irr}\left(\V(G,\alpha)\right)=G$. We denote by $I(g)$ the simple object corresponding to $g\in G$: the one dimensional graded vector space concentrated in degree $g$.

Here we describe the monoidal centre of $\V(G,\alpha)$. 
\begin{prop}\label{mc}
The monoidal centre $\cZ(\V(G,\alpha))$ is isomorphic, as braided monoidal category, to the category $\Z(G,\alpha)$.
\nl
The canonical forgetful functor $\cZ(\V(G,\alpha))\to \V(G,\alpha)$ corresponds to the functor $\Z(G,\alpha)\to \V(G,\alpha)$ forgetting the $G$-action.
\end{prop}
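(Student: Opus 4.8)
The plan is to establish the braided monoidal equivalence $\cZ(\V(G,\alpha))\simeq\Z(G,\alpha)$ by producing an explicit functor in each direction and checking they are mutually inverse. Recall that an object of the monoidal centre $\cZ(\V(G,\alpha))$ is a pair $(Z,z)$ where $Z\in\V(G,\alpha)$ is a $G$-graded vector space and $z_{-}:Z\ot -\overset{\sim}{\to}-\ot Z$ is a half-braiding, i.e.\ a natural family of isomorphisms satisfying the hexagon compatibility with the twisted associator $\phi$. First I would analyse the half-braiding by evaluating it on the simple objects $I(f)$, $f\in G$. Since every object of $\V(G,\alpha)$ is a direct sum of the $I(f)$, naturality reduces the half-braiding to a collection of isomorphisms $z_{I(f)}:Z\ot I(f)\to I(f)\ot Z$. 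Because $I(f)$ is one-dimensional and concentrated in degree $f$, such a map is precisely a grading-preserving automorphism $Z_g\to Z_{fgf^{-1}}$ for each homogeneous component, which I will denote $f.(-)$. This is the candidate $G$-action on $Z$.

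Next I would translate the hexagon axioms for the half-braiding into the $\alpha$-projectivity condition \eqref{pa}. The two hexagon identities for $z$, written out on the generators $I(f),I(g)$ and a homogeneous vector $v\in Z_h$, will involve the associator $\alpha$ evaluated at the various triples of group elements; after collecting these factors, they should reproduce exactly the combination $\alpha(f,g|h)$ from \eqref{tefa}, yielding $(fg).v=\alpha(f,g|h)\,f.(g.v)$. This identifies $(Z,z)$ with an object of $\Z(G,\alpha)$. Conversely, given a $G$-graded vector space with $\alpha$-projective action, I define a half-braiding by the formula \eqref{br}, $c_{X,Y}(x\ot y)=f.y\ot x$ for $x\in X_f$, and verify it satisfies the hexagons --- this is essentially the content of the (unproven) proposition asserting $\Z(G,\alpha)$ is braided monoidal, which I may invoke. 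These two assignments are manifestly inverse on objects, and on morphisms both sides are grading- and action-preserving maps, so the functors are inverse equivalences. Finally, I must check the tensor structure matches: the tensor product in $\cZ(\V(G,\alpha))$ is inherited from $\V(G,\alpha)$ with the half-braiding on a tensor product determined by the hexagon, and a direct computation shows the induced action on $X\ot Y$ is exactly \eqref{tp}, $f.(x\ot y)=\alpha(f|g,h)(f.x\ot f.y)$; likewise the braiding of $\cZ(\V(G,\alpha))$ is by definition the half-braiding, which we have matched to \eqref{br}. The forgetful functor statement is then immediate, since under this identification forgetting the half-braiding corresponds precisely to forgetting the $G$-action.

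The main obstacle will be the bookkeeping in the hexagon computation: one must carefully track which instances of the associator $\alpha$ appear and in which order, since the hexagon mixes the half-braiding with the twisted associator $\phi$ on three different bracketings, and the resulting product of cocycle values must be simplified (using the normalised $3$-cocycle condition) into the compact expressions \eqref{tefa} and \eqref{acfa}. Getting the inverses and the argument $^{g}h=ghg^{-1}$ placed correctly in each $\alpha$-factor is delicate, but it is a routine --- if lengthy --- verification, which is why the paper leaves it to the reader.
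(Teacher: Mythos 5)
Your proposal follows essentially the same route as the paper: evaluate the half-braiding on the simple objects $I(f)$ to extract a $G$-action, translate the hexagon/coherence conditions into the $\alpha$-projectivity \eqref{pa} and the tensor-product action \eqref{tp}, and match the centre's braiding with \eqref{br}. One small caution: the paper takes the half-braiding in the direction $x_V:V\ot X\to X\ot V$, whereas with your convention $z_V:Z\ot V\to V\ot Z$ the map $z_{I(f)}$ sends $Z_g$ to $Z_{f^{-1}gf}$ rather than $Z_{fgf^{-1}}$, so to recover the action and the cocycles \eqref{tefa}, \eqref{acfa} exactly as stated you should either flip the convention or define the action of $f$ via $z_{I(f)}^{-1}$.
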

\begin{proof}
For an object $(X,x)$ of the centre $\cZ(\V(G,\alpha))$, the natural isomorphism $$x_V:V\otimes X\to
X\otimes V,\quad V\in \V(G,\alpha)$$ is defined by its evaluations on one-dimensional graded vector spaces. 
The isomorphism $x_{I(f)}$ can be
seen as an automorphism $f:X\to X$. The fact, that $x_{I(f)}$ preserves grading, amounts to the condition $f(X_g)
= X_{fgf^{-1}}$: $$\xymatrix{X_g = (I(f)\otimes X)_{fg} \ar[rr]^{x_{I(f)}} && (X\otimes I(f))_{fg} = X_{fgf^{-1}}\
.}$$ The coherence condition for $x$ is equivalent to the the equation (\ref{pa}), with the associativity
constraints giving rise to $\alpha(h|f,g)$. The diagram, defining the second component $x|y$ of the tensor
product $(X,x)\otimes(Y,{\it y}) = (X\otimes Y,x|y)$, is equivalent to the tensor product of
projective actions (\ref{tp}), with the associativity constraints giving rise to $\alpha(g,h|f)$.
\newline
The description of the monoidal unit in a monoidal centre corresponds to the answer for the monoidal unit in
$\Z(G,\alpha)$.
\newline
Clearly, the braiding $c_{(X,x),(Y,y)} = y_X$ of the monoidal centre $\cZ(\V(G,\alpha))$ corresponds to the braiding
(\ref{br}) of $\Z(G,\alpha)$.
\end{proof}

\subsection{Full centre}

Here we identify Lagrangian algebras in $\Z(G,\alpha)$ with full centres of separable indecomposable algebras in $\V(G,\alpha)$.
Recall that any such algebra is isomorphic to the twisted group algebra  $k[H,\gamma]$ for a subgroup $H\subset G$ and a coboundary $d(\gamma)=\alpha|_H$. 

\bth\lb{fcia}
The full centre $Z(k[H,\gamma])$ coincides with $L(H,\gamma)$.
\eth
\bpf
It suffices to construct a homomorphism of algebras $\zeta:L(H,\gamma) \to k[H,\gamma]$ in $\V(G,\alpha)$ fitting in the diagram \eqref{cp}. Indeed, such a homomorphism induces a homomorphism of algebras $\tilde\zeta:L(H,\gamma) \to Z(k[H,\gamma])$ in $\Z(G,\alpha)$. Since $L(H,\gamma)$ is separable and indecomposable in $\Z(G,\alpha)$, by lemma \ref{is}, $\tilde\zeta$ is a monomorphism. Finally, the dimension of $\Z(G,\alpha)$ is $|G|$, which coincides with the dimension of the full centre $Z(k[H,\gamma])$. 
\nl
Let us define a map $\zeta:E\left(k\left[H,\gamma\right]\right)\rightarrow k\left[H,\gamma\right]$ by
$
\zeta\left(a\right)=a(e).
$
Observe that this definition, effectively, implies that $\zeta(a)=0$ if $|a|\in G\setminus H$.
We claim that $\zeta$ is a homomorphism of algebras in the category $\V\left(G,\alpha\right)$.  As $\zeta$ is an evaluation map, it is automatically multiplicative.  It remains to check that $\zeta$ is $G$-graded. Recall from the proof of theorem \ref{loc1} that for a homogeneous $a\in E\left(k\left[H,\gamma\right]\right)$ 
the degree of the values are $\left|a(x)\right|=x^{-1}|a|x\qquad \forall x\in G \ .$ 
Thus $\left|\zeta(a)\right|=\left|a(e)\right|=|a|\ .$
\nl
Now we consider the diagram (\ref{cp}).  Here, $A=k\left[H,\gamma\right]$ and $Z=L\left(H,\gamma\right)$, so that the diagram becomes
$$
\xymatrix{ k\left[H,\gamma\right]\otimes L\left(H,\gamma\right) \ar[r]^<<<<<{\mathrm{Id}\otimes\zeta} 
\ar[dd]_{c} & k\left[H,\gamma\right]^{\otimes2}  \ar[rd]^\mu\\ & & k\left[H,\gamma\right]\\ 
 L\left(H,\gamma\right)\otimes k\left[H,\gamma\right] \ar[r]_<<<<{\zeta\otimes\mathrm{Id}} & k\left[H,\gamma\right]^{\otimes2} \ar[ur]_\mu }
$$
Let $a\in L\left(H,\gamma\right)$, and let $e_h\in k\left[H,\gamma\right]$ for some $h\in H$.  Going the short way, we obtain
$$
e_h\otimes a\mapsto e_h\otimes a(e)\mapsto e_{h}a(e).
$$
Going the long way, we obtain 
$$
e_h\otimes a\mapsto\left(h.a\right)\otimes e_h\mapsto\left(\left(h.a\right)(e)\right)\otimes e_h\mapsto\left(\left(h.a\right)(e)\right)e_h.
$$
By $H$-equivariance of $a$, the last expression is 
$$
\left(\left(h.a\right)(e)\right)e_h=a\left(h^{-1}\right)e_h=\Big(h.\left(a(e)\right)\Big)e_h
$$
Finally, since the $H$-action on $k\left[H,\gamma\right]$ is inner, one has
$$\Big(h.\left(a(e)\right)\Big)e_h =  e_{h}a(e)e_h^{-1}e_h\ .$$
\epf

%%%%%%%%%%%
\section{Characters of Lagrangian algebras}

\subsection{Characters}\lb{char}

Here we recall basic facts about characters of objects of $\Z(G,\alpha)$. 
\nl
For an object $Z\in\Z(G,\alpha)$, define its {\em character} as the function on pairs of commuting elements of $G$ defined by
$$\chi_Z(f,g) = \mathrm{Tr}_{Z_f}(g).$$ 

\begin{lem}%\label{rep00}
The character $\chi_{Z}$ of $Z\in\Z(G,\alpha)$ satisfies
\beq\lb{pcf}\chi_{Z}\left(xfx^{-1},xgx^{-1}\right)=\frac{\alpha\left(x,g|f\right)}{\alpha\left(xgx^{-1},x|f\right)}\chi_{Z}\left(f,g\right)\ .
\eeq
\end{lem}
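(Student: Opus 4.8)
The plan is to prove the transformation formula by direct computation, tracking how the $\alpha$-projective action and the character interact under conjugation. Let me think through this carefully.

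The character is $\chi_Z(f,g) = \mathrm{Tr}_{Z_f}(g)$, defined for commuting pairs $f,g$ (so that $g$ acts on $Z_f$, since $g.Z_f = Z_{gfg^{-1}} = Z_f$ when $g,f$ commute). I want to compute $\chi_Z(xfx^{-1}, xgx^{-1})$.

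The key tool is the $\alpha$-projective relation (pa): $(fg).v = \alpha(f,g|h)f.(g.v)$ for $v \in V_h$. This lets me factor a product of group elements into sequential actions, with a scalar correction.

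So the strategy: The element $xgx^{-1}$ acts on $Z_{xfx^{-1}}$. Using the $\alpha$-projective relation repeatedly, I can write $(xgx^{-1})$ acting as a composition $x \circ g \circ x^{-1}$ with scalar corrections. Concretely, $x^{-1}: Z_{xfx^{-1}} \to Z_f$, then $g: Z_f \to Z_f$, then $x: Z_f \to Z_{xfx^{-1}}$. The composition $x \circ g \circ x^{-1}$ equals $(xgx^{-1})$ up to a scalar coming from the $\alpha$-projective multipliers. Since $x \circ x^{-1}$ is the identity up to a scalar (applied to $Z_{xfx^{-1}}$), and the trace is conjugation-invariant in the vector-space sense, the conjugate of the action of $g$ on $Z_f$ by the isomorphism $x: Z_f \to Z_{xfx^{-1}}$ has the same trace as $g$ on $Z_f$.

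Let me verify the scalar. Write $xgx^{-1} = (x)(gx^{-1}) = (x)(g)(x^{-1})$... I need to use (pa) to peel off the factors. Apply to $v \in Z_{xfx^{-1}}$. First, $(gx^{-1}).v = \alpha(g, x^{-1}|xfx^{-1}) g.(x^{-1}.v)$. Then $(xgx^{-1}).v = (x \cdot gx^{-1}).v = \alpha(x, gx^{-1}|xfx^{-1}) x.((gx^{-1}).v) = \alpha(x, gx^{-1}|xfx^{-1})\alpha(g,x^{-1}|xfx^{-1}) x.(g.(x^{-1}.v))$.

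Now the composite $x \circ g \circ x^{-1}$ on $Z_{xfx^{-1}}$: let $\theta = x|_{Z_f}: Z_f \to Z_{xfx^{-1}}$, which is an isomorphism. Then $x \circ g \circ x^{-1} = \theta \circ g|_{Z_f} \circ \theta'^{-1}$ where $\theta' = x^{-1}$... I need to relate $x^{-1}$ and the inverse of $\theta$. On $Z_{xfx^{-1}}$, applying $x^{-1}$ then $x$ gives $(x x^{-1})$ up to scalar: $(e).v = v = \alpha(x,x^{-1}|xfx^{-1})^{-1}(xx^{-1}).v$, i.e. $x.(x^{-1}.v) = \alpha(x,x^{-1}|xfx^{-1})^{-1}v$. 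So $x^{-1}|_{Z_{xfx^{-1}}} = \alpha(x,x^{-1}|xfx^{-1})^{-1}\theta^{-1}$.

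Therefore $x.(g.(x^{-1}.v)) = \alpha(x,x^{-1}|xfx^{-1})^{-1}\theta \, g \, \theta^{-1}(v)$, a genuine conjugation of the operator $g|_{Z_f}$ by $\theta$, scaled. Taking the trace on $Z_{xfx^{-1}}$:
$$\chi_Z(xfx^{-1},xgx^{-1}) = \alpha(x,gx^{-1}|xfx^{-1})\,\alpha(g,x^{-1}|xfx^{-1})\,\alpha(x,x^{-1}|xfx^{-1})^{-1}\,\mathrm{Tr}(\theta g \theta^{-1}) = C \cdot \chi_Z(f,g),$$
since $\mathrm{Tr}(\theta g \theta^{-1}) = \mathrm{Tr}(g|_{Z_f}) = \chi_Z(f,g)$.

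The remaining task is purely a cocycle identity: I must show that the scalar
$$C = \frac{\alpha(x,gx^{-1}|xfx^{-1})\,\alpha(g,x^{-1}|xfx^{-1})}{\alpha(x,x^{-1}|xfx^{-1})}$$
equals $\frac{\alpha(x,g|f)}{\alpha(xgx^{-1},x|f)}$. I expect this to follow from the identity (pca), namely $\alpha(f,gh|u)\alpha(g,h|u) = \alpha(fg,h|u)\alpha(f,g|huh^{-1})$, applied with suitable substitutions to regroup the products $x(gx^{-1})$ and reindex the ``$|h$'' slot under conjugation. This reindexing of the conjugation-variable is exactly the source of the asymmetry $\alpha(x,g|f)$ versus $\alpha(xgx^{-1},x|f)$. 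I anticipate the main obstacle will be bookkeeping in this cocycle manipulation: getting the conjugation slot to shift correctly (the $huh^{-1}$ term in (pca)) so that $f$ in the final answer pairs with $x$ on one factor and with $xgx^{-1}$ on the other, and confirming all the normalization and inverse conventions line up. This is routine but delicate, so I would carry it out by applying (pca) once or twice and simplifying.
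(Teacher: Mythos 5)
Your argument is correct, and it rests on the same underlying idea as the paper's proof: the similarity invariance of the trace under the isomorphism $x\colon Z_f\to Z_{xfx^{-1}}$. The difference is in the choice of factorization. The paper compares the two factorizations $\rho(x)\rho(g)=\alpha(x,g|f)^{-1}\rho(xg)$ and $\rho(xgx^{-1})\rho(x)=\alpha(xgx^{-1},x|f)^{-1}\rho(xg)$, both read on $Z_f$; equating them gives $\rho(x)\rho(g)\rho(x)^{-1}=\frac{\alpha(xgx^{-1},x|f)}{\alpha(x,g|f)}\rho(xgx^{-1})$ directly, so the two multipliers in the statement appear immediately and no cocycle identity is ever needed. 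You instead peel $xgx^{-1}$ apart on $Z_{xfx^{-1}}$, which produces three multipliers and forces you to invoke \eqref{pca} to collapse them. Your deferred computation does close: applying \eqref{pca} with $(f,g,h,u)=(x,g,x^{-1},xfx^{-1})$ gives $\alpha(x,gx^{-1}|xfx^{-1})\alpha(g,x^{-1}|xfx^{-1})=\alpha(xg,x^{-1}|xfx^{-1})\alpha(x,g|f)$, and applying it again with $(f,g,h,u)=(xgx^{-1},x,x^{-1},xfx^{-1})$, using the normalization $\alpha(a,e|u)=1$, gives $\alpha(x,x^{-1}|xfx^{-1})=\alpha(xg,x^{-1}|xfx^{-1})\alpha(xgx^{-1},x|f)$; dividing yields exactly $C=\frac{\alpha(x,g|f)}{\alpha(xgx^{-1},x|f)}$. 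So your proof is complete once this is written out; the moral is only that the paper's choice of which product to factor eliminates the bookkeeping you anticipated.
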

\begin{proof}
Let $\rho:G\rightarrow\mathrm{Aut}\left(Z\right)$ be the homomorphism corresponding to the action of $G$ on the vector space $Z$.  
Note that by \eqref{pa} we can write
$$
\rho(x)\rho(g)\rho(x)^{-1} = \frac{\alpha\left(xgx^{-1},x|f\right)}{\alpha\left(x,g|f\right)}\rho\left(xgx^{-1}\right),
$$
Indeed,
$\rho(x)\rho(g) = \alpha\left(x,g|f\right)^{-1}\rho(xg)$
together with 
$\rho\left(xgx^{-1}\right)\rho\left(x\right) = \alpha\left(xgx^{-1},x|f\right)^{-1}\rho(xg)$ give the desired.

Finally
$$\chi_{Z}\left(f,g\right)=\mathrm{Tr}_{Z_{f}}\left(\rho(g)\right)=\mathrm{Tr}_{Z_{xfx^{-1}}}\left(\rho(x)\rho(g)\rho(x)^{-1}\right) = $$ $$= \frac{\alpha\left(xgx^{-1},x|f\right)}{\alpha\left(x,g|f\right)}\mathrm{Tr}_{Z_{xfx^{-1}}}\left(\rho\left(xgx^{-1}\right)\right) = \frac{\alpha\left(xgx^{-1},x|f\right)}{\alpha\left(x,g|f\right)}\chi_{Z}\left(xfx^{-1},xgx^{-1}\right)\ .
$$
\end{proof}

By a {\em character} of $\Z(G,\alpha)$, we will mean a function on pairs of commuting elements of $G$ satisfying the projective class function property \eqref{pcf}. 

\bre
Equation \eqref{pcf} implies that a character of $\Z(G,\alpha)$ can be non-zero only on those commuting pairs $(f,g)$ for which 
$$\frac{\alpha(x,f,g)\alpha(g,x,f)\alpha(f,g,x)}{\alpha(x,g,f)\alpha(f,x,g)\alpha(g,f,x)} = 1$$
for any $x$ from the centraliser $C_G(f,g)$. 
\ere

For characters $\chi$ and $\xi$ of $\Z(G,\alpha)$, define their {\em product} by 
$$
\left(\chi\xi\right)\left(f,g\right)=\sum_{\substack{f_{1}f_{2}=f\\f_{i}g=gf_{i}}}{\alpha\left(g|f_{1},f_{2}\right)\chi\left(f_{1},g\right)\xi\left(f_{2},g\right)}.
$$
It can be checked that the product of characters is a character.

\begin{lem}
Let $Z,W\in\Z(G,\alpha)$, then 
$$
\chi_Z\chi_W=\chi_{Z\otimes W}.
$$
\end{lem}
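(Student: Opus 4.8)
The plan is to compute $\chi_{Z\otimes W}(f,g)=\mathrm{Tr}_{(Z\otimes W)_f}(g)$ directly from the definitions and identify the result with the product formula. First I would use the grading of the tensor product to write
\[
(Z\otimes W)_f = \bigoplus_{f_1 f_2 = f} Z_{f_1}\otimes W_{f_2}.
\]
The $\alpha$-projective action of $g$ sends the summand $Z_{f_1}\otimes W_{f_2}$ into $Z_{gf_1g^{-1}}\otimes W_{gf_2g^{-1}}$, so $g$ permutes the summands by $f_1\mapsto gf_1g^{-1}$ (here $g$ commutes with $f$, so $f_2=f_1^{-1}f$ is carried to $gf_2g^{-1}$ consistently). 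Since the trace only detects the diagonal blocks, only those pairs $(f_1,f_2)$ with $gf_1g^{-1}=f_1$, equivalently $gf_2g^{-1}=f_2$, contribute.

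Next, on such a fixed summand, formula \eqref{tp} shows that $g$ acts as $\alpha(g|f_1,f_2)\,(g\otimes g)$, where on $Z_{f_1}$ and $W_{f_2}$ the maps $g$ are genuine automorphisms (since $g$ commutes with $f_1$ and $f_2$). Taking traces and using multiplicativity of the trace over a tensor product gives
\[
\mathrm{Tr}_{Z_{f_1}\otimes W_{f_2}}(g)=\alpha(g|f_1,f_2)\,\mathrm{Tr}_{Z_{f_1}}(g)\,\mathrm{Tr}_{W_{f_2}}(g)=\alpha(g|f_1,f_2)\,\chi_Z(f_1,g)\,\chi_W(f_2,g).
\]

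Summing over the contributing summands then yields
\[
\chi_{Z\otimes W}(f,g)=\sum_{\substack{f_1 f_2=f\\ f_i g=g f_i}}\alpha(g|f_1,f_2)\,\chi_Z(f_1,g)\,\chi_W(f_2,g)=(\chi_Z\chi_W)(f,g),
\]
which is precisely the defining formula for the product of characters (the two commutation conditions collapsing to one since $fg=gf$). The only points requiring care are the bookkeeping of which summands are fixed by $g$ and checking that the scalar $\alpha(g|f_1,f_2)$ coming from the projective action on the tensor product factors cleanly out of the trace; both are routine, so I do not anticipate any serious obstacle.
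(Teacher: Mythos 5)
Your proposal is correct and follows essentially the same route as the paper: decompose $(Z\otimes W)_f$ into $\bigoplus_{f_1f_2=f}Z_{f_1}\otimes W_{f_2}$, observe that only the summands fixed by $g$ contribute to the trace, and use \eqref{tp} to pull out the scalar $\alpha(g|f_1,f_2)$. Your write-up is in fact slightly more careful than the paper's, which passes directly to the sum over commuting pairs without spelling out the permutation-of-summands argument.
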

\bpf
Write
$$\chi_{Z\otimes W}\left(f,g\right)=\mathrm{Tr}_{\left(Z\otimes W\right)_f}(g)=\sum_{\substack{f_{1}f_{2}=f\\f_{i}g=gf_i}}{\mathrm{Tr}_{Z_{f_1}\otimes W_{f_2}}(g)}\ .$$  Using \eqref{tp}, we get $$\sum_{\substack{f_{1}f_{2}=f\\f_{i}g=gf_i}}{\mathrm{Tr}_{Z_{f_1}\otimes W_{f_2}}(g)}=\sum_{\substack{f_{1}f_{2}=f\\f_{i}g=gf_i}}{\alpha\left(g|f_1,f_2\right)\mathrm{Tr}_{Z_{f_1}}(g)\mathrm{Tr}_{W_{f_2}}(g)}=$$ $$=\sum_{\substack{f_{1}f_{2}=f\\f_{i}g=gf_i}}{\alpha\left(g|f_1,f_2\right)\chi_{Z}\left(f_1,g\right)\chi_{W}\left(f_2,g\right)}=(\chi_{Z}\chi_{W})(f,g)\ .$$ 
\epf

\bre
Using lemma \ref{do}, one finds that the character of the dual object (the {\em dual character}) has the form
$$\chi_{Z^\sve}(f,g) = \frac{\alpha(g^{-1},g|f^{-1})}{\alpha(g|f,f^{-1})}\chi_Z(f^{-1},g^{-1}).$$
\ere

Define the {\em scalar product} of characters of $\Z(G,\alpha)$
(see \cite{ba1}):
$$(\chi,\psi) = \frac{1}{|G|}\sum_{\substack{f,g\in G, \\ fg=gf}}\alpha(g^{-1},g|f)\chi(f,g^{-1})\psi(f,g),$$ 
The scalar product calculates dimensions of corresponding Hom-spaces in $\Z(G,\alpha)$. 
\begin{lem}
Let $Z,W\in\Z(G,\alpha)$.  Then 
$$(\chi_Z,\chi_W) = \dim\left(\Z(G,\alpha)(Z,W)\right).$$ 
\end{lem}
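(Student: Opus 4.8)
The plan is to prove the orthogonality formula
$$(\chi_Z,\chi_W) = \dim\left(\Z(G,\alpha)(Z,W)\right)$$
by reducing the scalar product to the computation of the dimension of a space of invariants, and then recognising that space as the Hom-space in $\Z(G,\alpha)$. First I would unwind the left-hand side using the definition of the scalar product and the characters $\chi_Z(f,g) = \mathrm{Tr}_{Z_f}(g)$. Writing $\rho,\sigma$ for the $\alpha$-projective $G$-actions on $Z,W$ respectively, the product becomes
$$(\chi_Z,\chi_W) = \frac{1}{|G|}\sum_{\substack{f,g\in G\\ fg=gf}}\alpha(g^{-1},g|f)\,\mathrm{Tr}_{Z_f}(\rho(g^{-1}))\,\mathrm{Tr}_{W_f}(\sigma(g)).$$
The scalar factor $\alpha(g^{-1},g|f)$ is exactly the discrepancy between $\rho(g^{-1})$ and $\rho(g)^{-1}$ (and similarly for $\sigma$), so I would first absorb it to rewrite the summand in terms of genuine inverses, turning $\mathrm{Tr}_{Z_f}(\rho(g^{-1}))$ into $\mathrm{Tr}_{Z_f}(\rho(g)^{-1})$ up to the recorded projective multiplier.

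Next I would fix a degree $f$ and sum over the centraliser $g\in C_G(f)$. Morphisms in $\Z(G,\alpha)$ are grading- and action-preserving, so $\Z(G,\alpha)(Z,W) = \bigoplus_{f\in cl(G)}\Hom_{C_G(f)}(Z_f,W_f)$, where the Hom on the right is taken with respect to the projective $C_G(f)$-representations on the degree-$f$ components (these are genuine modules over the twisted group algebra $k[C_G(f),\alpha(\ ,\ |f)^{-1}]$ by lemma \ref{cat1}). The key tool is the standard character-theoretic projection formula for twisted group algebras: for two projective representations of a finite group with the same cocycle, the dimension of the space of intertwiners equals the averaged product of one character with the conjugate of the other. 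I would verify that the averaging over $C_G(f)$ of $\mathrm{Tr}_{Z_f}(\rho(g)^{-1})\mathrm{Tr}_{W_f}(\sigma(g))$, weighted by the cocycle factor, computes precisely $\dim\Hom_{C_G(f)}(Z_f,W_f)$, using that both actions are twisted by the same two-cocycle $\alpha(\ ,\ |f)^{-1}$ so that the projective multipliers cancel in the intertwiner count.

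Finally I would sum over conjugacy classes. The subtlety here is bookkeeping: the original sum ranges over all commuting pairs $(f,g)$ in $G\times G$, whereas the character-orthogonality for a single centraliser only sees one representative $f$ per class. By the projective class function property \eqref{pcf}, the contribution of each $f'$ in a conjugacy class $f^G$ is governed by $Z_{f'}$, which is isomorphic as a $C_G(f')$-representation to $Z_f$ as a $C_G(f)$-representation; conjugation gives a bijection matching the weighted summands across the class. Summing over the $|f^G|$ elements of each class exactly cancels the $\frac{1}{|G|}$ against the $\frac{1}{|C_G(f)|}$ needed for the single-centraliser orthogonality, since $|f^G|\cdot|C_G(f)| = |G|$. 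I expect the main obstacle to be this last cancellation of cocycle factors under conjugation: one must check that the twist $\alpha(x,g|f)/\alpha(xgx^{-1},x|f)$ appearing in \eqref{pcf} is precisely what is needed to identify the summand at $f'=xfx^{-1}$ with the summand at $f$, so that the whole sum collapses cleanly onto one representative per class. This is a cocycle identity following from the relations \eqref{pca} and its companions, but it requires careful tracking of the projective phases rather than any new idea.
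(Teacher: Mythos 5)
Your plan is correct, but it is a genuinely different route from the paper's. The paper avoids the conjugacy-class decomposition altogether: it first observes that $\Z(G,\alpha)(I,W)=W_e^G$ with a genuine $G$-action on $W_e$ (so the invariants are computed by the untwisted average $\frac{1}{|G|}\sum_g\chi_W(e,g)$), and then reduces the general case to this one via rigidity, $\Z(G,\alpha)(Z,W)\simeq\Z(G,\alpha)(I,Z^\sve\ot W)$, feeding in the already-established product formula for $\chi_{Z^\sve\ot W}(e,g)$ and the dual-character formula; the sum over all commuting pairs $(f,g)$ and the weight $\alpha(g^{-1},g|f)$ then drop out automatically from the cocycle bookkeeping, with no per-centraliser orthogonality and no need to check invariance of the summand under conjugation. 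Your route instead decomposes $\Z(G,\alpha)(Z,W)$ into blocks $\Hom_{C_G(f)}(Z_f,W_f)$ over twisted group algebras (lemma \ref{cat1}), applies projective character orthogonality on each centraliser, and collapses the sum onto class representatives using \eqref{pcf}. This works: the key identity $\rho(g)^{-1}=\alpha(g^{-1},g|f)\rho(g^{-1})$ on $Z_f$ (immediate from \eqref{pa}) shows that the weighted summand is exactly $\mathrm{Tr}_{Z_f^\sve\ot W_f}(g)$ for the genuine $C_G(f)$-action on $Z_f^\sve\ot W_f$, which is what makes the twisted orthogonality go through, and it also explains conceptually why the scalar product carries precisely that weight. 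The price is the conjugation-invariance check you flag at the end, which is a real (if routine) cocycle computation that the paper's argument never has to face; the benefit is a more transparent representation-theoretic picture of the formula, block by block.
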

\bpf
Note first that the Hom-space $\Z(G,\alpha)(I,W)$ coincides with the vector space of $G$-invariants $W_{e}^{G}$, so that
$$\dim\left(\Z(G,\alpha)(I,W)\right)  = dim_k(W_e^G) = \frac{1}{|G|}\sum_g \chi_W(e,g)\ .$$
In the general case, $\Z(G,\alpha)(Z,W) \simeq \Z(G,\alpha)(I,Z^\sve\ot W)$, and 
$$\dim\left(\Z(G,\alpha)(Z,W)\right) = \dim\left(\Z(G,\alpha)(I,Z^\sve\ot W)\right) = \frac{1}{|G|}\sum_g \chi_{Z^\sve\ot W}(e,g) = $$
$$= \frac{1}{|G|}\sum_{\substack{f,g\in G, \\ fg=gf}}\alpha(g|f^{-1},f)\chi_{Z^\sve}(f^{-1},g)\chi_W(f,g) = \frac{1}{|G|}\sum_{\substack{f,g\in G, \\ fg=gf}}\alpha(g|f^{-1},f)\frac{\alpha(g^{-1},g|f^{-1})}{\alpha(g|f,f^{-1})}\chi_Z(f,g^{-1})\chi_W(f,g) = $$
$$=  \frac{1}{|G|}\sum_{\substack{f,g\in G, \\ fg=gf}}\alpha(g^{-1},g|f)\chi_Z(f,g^{-1})\chi_W(f,g)\ .$$
\epf

In particular, for simple $Z,W\in\Z(G,\alpha)$ the scalar product $(\chi_Z,\chi_W) = 1$ iff $Z\simeq W$ and zero otherwise.

\subsection{Characters of Lagrangian algebras}

Here we compute the characters of Lagrangian algebras in $\Z(G,\alpha)$. 
\nl
Recall from the proof of theorem \ref{loc1} the functor $E:\Z(H,\alpha|_H)\to \Z(G,\alpha)$ given by 
\eqref{e}.

\begin{lem}\label{charl}
For $V\in\Z(H,\alpha|_H)$, the character $\chi_{E(V)}$ has the form 
\begin{equation}\label{char1}
\chi_{E(V)}\left(f,g\right)=\sum_{y\in Y}{\frac{\alpha\left(y^{-1}gy,y^{-1}|f\right)}{\alpha\left(y^{-1},g|f\right)}\chi_{V}\left(y^{-1}fy,y^{-1}gy\right)},\qquad f,g\in G,
\end{equation}
where $Y$ is a set of representatives in $G$ of the cosets from
$$\{y\in G|\ y^{-1}fy\in H,\ y^{-1}gy\in H\}/H\ \subset\ G/H\ .$$
\end{lem}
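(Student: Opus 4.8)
The plan is to compute $\chi_{E(V)}(f,g) = \mathrm{Tr}_{E(V)_f}(g)$ directly from the explicit description of the object $E(V)$ and its $G$-action given in the proof of theorem \ref{loc1}. Recall that $E(V)$ consists of functions $a:G\to V$ subject to the $H$-equivariance condition \eqref{e}, that a homogeneous element $a$ of degree $|a|=f$ satisfies $|a(x)|=x^{-1}fx$, and that the action is $(g.a)(x)=\alpha(x^{-1},g|f)^{-1}a(g^{-1}x)$. The first step is therefore to identify a convenient basis for the graded piece $E(V)_f$. By the equivariance relation, a homogeneous $a$ of degree $f$ is determined by its values on a set of representatives of the cosets $yH$; moreover $a(y)$ can only be nonzero when $|a(y)|=y^{-1}fy$ lies in $H$ (since values land in $V$, supported on $H$). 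Thus $E(V)_f$ decomposes as a sum indexed by those cosets $yH$ with $y^{-1}fy\in H$, with the $y$-summand isomorphic to $V_{y^{-1}fy}$.

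The second step is to track how $g\in G$ acts across this decomposition and extract the diagonal contribution to the trace. Since $(g.a)(x)=\alpha(x^{-1},g|f)^{-1}a(g^{-1}x)$, the action sends the $y$-block to the $y'$-block where $g^{-1}y'H = yH$, i.e. $y'H=gyH$. Hence $g$ permutes the blocks according to left multiplication on the index set $\{yH : y^{-1}fy\in H\}$, and only the fixed blocks---those with $gyH=yH$, equivalently $y^{-1}gy\in H$---contribute to the trace. This is exactly why the summation set in \eqref{char1} is the double condition $y^{-1}fy\in H$ and $y^{-1}gy\in H$, modulo $H$, which matches the stated index set $Y$. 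On each such fixed block the induced endomorphism is, up to the scalar prefactor coming from the multiplier $\alpha(x^{-1},g|f)^{-1}$ evaluated at the relevant point together with the conjugated $H$-action, the action of $y^{-1}gy$ on $V_{y^{-1}fy}$, whose trace is precisely $\chi_V(y^{-1}fy,y^{-1}gy)$.

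The third and most delicate step is the bookkeeping of the cocycle scalars. I would evaluate the equivariance relation and the action formula at the specific arguments forced by the fixed-block condition, reduce $g^{-1}y$ to the representative $y$ via the element $y^{-1}gy\in H$, and use the identities \eqref{pca} (and the definitions \eqref{tefa},\eqref{acfa}) to collapse the accumulated factors into the single ratio $\frac{\alpha(y^{-1}gy,y^{-1}|f)}{\alpha(y^{-1},g|f)}$ appearing in \eqref{char1}. Concretely, the factor $\alpha(x^{-1},g|f)^{-1}$ from the action, the twist in the $H$-equivariance \eqref{e} used to relabel $a(g^{-1}y)$ in terms of $a(y)$, and the conjugation multiplier relating the $H$-action on $V_{y^{-1}fy}$ to the $G$-action all combine; I expect the 2-cocycle identity \eqref{pca} for the mixed symbols to be the tool that produces the clean quotient.

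The main obstacle will be exactly this scalar bookkeeping: keeping the base-point conventions consistent (the representative $y$ versus $g^{-1}y$ versus $gy$), ensuring the equivariance twist is applied with the correct argument and inverse, and verifying that the leftover multipliers assemble into the asserted ratio rather than some cohomologous but superficially different expression. Everything else---the block decomposition of $E(V)_f$, the permutation action of $g$ on blocks, and the reduction of the trace to fixed blocks---is structurally forced and routine; the genuine content lies in confirming that the projective multipliers for the $G$-action on $E(V)$ restrict correctly and telescope to \eqref{char1}.
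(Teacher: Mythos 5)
Your proposal follows essentially the same route as the paper: decompose $E(V)_f$ into blocks indexed by cosets $yH$ (each block isomorphic to $V_{y^{-1}fy}$, via the functions $a_{y,v}$ supported on a single coset), observe that $g$ permutes the blocks by left multiplication on $G/H$ so that only the fixed cosets with $y^{-1}gy\in H$ contribute to the trace, and then extract the scalar on each fixed block. The only remark worth adding is that the ``delicate bookkeeping'' you anticipate is in fact a direct two-step calculation --- apply the action formula at $x=y$ to get the factor $\alpha(y^{-1},g|f)^{-1}$, then rewrite $g^{-1}y=y\cdot(y^{-1}g^{-1}y)$ and apply the equivariance relation \eqref{e} with $h=y^{-1}g^{-1}y$ to produce $\alpha(y^{-1}gy,y^{-1}|f)$ and the action of $y^{-1}gy$ on $v$ --- so no appeal to the cocycle identity \eqref{pca} is needed.
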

\begin{proof}
Let $V\in\Z\left(H,\alpha|_H\right)$. It follows from the defining condition that functions from $E(V)$ are supported by unions of right $H$-cosets of $G$. Clearly, any function from $E(V)$ is (a unique) sum of functions, each of which is supported on a single coset.  For a coset $yH$, a function $a\in E(V)$ with the support $\mathrm{supp}(a)=yH$ is uniquely determined by its value $a(y)=v\in V$. Denote such a function by $a_{y,v}$.  The space $E(V)$ is spanned by $a_{y,v}$ for $y\in G$ and $v\in V$. 
\nl
By \eqref{mapdeg} the degree of $a_{y,v}$ is $f$ iff $v\in V_{y^{-1}fy}$. 
Note that for $v\in V_{h^{-1}y^{-1}fyh}$ one has 
$$\alpha(h^{-1},y^{-1}|f)\alpha(h,h^{-1}|y^{-1}fy)a_{yh,v} = a_{y,h.v},\qquad h\in H\ .$$
Indeed, $v = a_{yh,v}(yh) = \alpha(h^{-1},y^{-1}|f)h^{-1}.a_{yh,v}(y)$ gives 
$$a_{y,h.v}(y) = h.v = \alpha(h^{-1},y^{-1}|f)\alpha(h,h^{-1}|y^{-1}fy)a_{yh,v}(y)\ .$$
Thus we can write 
\begin{equation}\lb{dec}
E(V)_f = \bigoplus_y\langle a_{y,v} |\ v\in V_{y^{-1}fy}\rangle \simeq \bigoplus_y{V_{y^{-1}fy}},
\end{equation}
where the sum is taken over representatives of the cosets $G/H$.

For $g\in C_{G}\left(f\right)$, consider the linear operator $g:E(V)_f\rightarrow E(V)_f$. Recall (from the proof of theorem \ref{loc1})
that the action of $g$ on $E(V)$ is given by 
$$(g.a)(x)=\alpha(x^{-1},g| f)^{-1}a(g^{-1}x),\qquad x\in G,\ a\in E(V)_f\ .$$
In particular $\mathrm{supp}(g.a)=g\ \mathrm{supp}(a)$. Thus $g.a_{y,v}$ is supported by the coset $gyH$ and hence can be written as $a_{gy,v'}$ for some $v'\in V$. 
%More explicitly for $v\in V_{y^{-1}fy}$ $$v' = g.a_{y,v}(gy) = \alpha((gy)^{-1},g| f)^{-1}a_{y,v}(y) = \alpha((gy)^{-1},g| f)^{-1}v\ .$$
We can say that the action of $g$ on $E(V)_f$ permutes the direct summands of \eqref{dec} according to the left action of $g$ on the cosets $G/H$.
\nl
Thus the trace ${\mathrm{Tr}_{E(V)_f}(g)}$ is a sum $\sum_y{\mathrm{Tr}_{\langle a_{y,v} |\ v\in V_{y^{-1}fy}\rangle}(g)}$, where $y$ runs through representatives of those cosets $yH$ for which $gyH=yH$. 
Let now $y\in G$ be such that $gyH=yH$ (or equivalently $y^{-1}gy\in H$). To compute the trace $\mathrm{Tr}_{\langle a_{y,v} |\ v\in V_{y^{-1}fy}\rangle}(g)$ note that $g.a_{y,v}$ (with $v\in V_{y^{-1}fy}$) can be written as $a_{y,v'}$ for some $v'\in V$. More explicitly 
$$v' = g.a_{y,v}(y) = \alpha(y^{-1},g| f)^{-1}a_{y,v}(g^{-1}y) = \alpha(y^{-1},g| f)^{-1}a_{y,v}(yy^{-1}g^{-1}y) = $$
$$= \frac{\alpha(y^{-1}gy,y^{-1}|f)}{\alpha(y^{-1},g| f)}(y^{-1}gy).a_{y,v}(y) = \frac{\alpha(y^{-1}gy,y^{-1}|f)}{\alpha(y^{-1},g| f)}(y^{-1}gy).v\ .$$
Thus the trace $\mathrm{Tr}_{\langle a_{y,v} |\ v\in V_{y^{-1}fy}\rangle}(g)$ coincides with $\frac{\alpha(y^{-1}gy,y^{-1}|f)}{\alpha(y^{-1},g| f)}\mathrm{Tr}_{V_{y^{-1}fy}}(y^{-1}gy)$.
\nl
Finally  $$\chi_{E(V)}\left(f,g\right)=\mathrm{Tr}_{E(V)_f}\left(g\right)=\sum_{y\in Y}\frac{\alpha(y^{-1}gy,y^{-1}|f)}{\alpha(y^{-1},g| f)}\mathrm{Tr}_{V_{y^{-1}fy}}(y^{-1}gy) = $$ $$= \sum_{y\in Y}{\frac{\alpha\left(y^{-1}gy,y^{-1}|f\right)}{\alpha\left(y^{-1},g|f\right)}\chi_{V}\left(y^{-1}fy,y^{-1}gy\right)}\ .$$ 
\end{proof}

\begin{lem}\label{tgac}
Let $\gamma\in C^2(H,k^*)$ be a coboundary for $\alpha|_H$. The $\mathcal{Z}(H,\alpha|_H)$ character $\chi_{k[H,\gamma]}$ of the twisted group algebra $k[H,\gamma]$ is given by 
$$
\chi_{k[H,\gamma]}(h,u) = \frac{\gamma(u,h)}{\gamma(h,u)}\ .
$$
\end{lem}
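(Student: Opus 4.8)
The plan is to unwind the definition of the character directly, exploiting that the homogeneous components of the twisted group algebra are one-dimensional. By definition $\chi_{k[H,\gamma]}(h,u) = \mathrm{Tr}_{(k[H,\gamma])_h}(u)$ for a commuting pair $h,u\in H$. Since $(k[H,\gamma])_h = k\cdot e_h$ is a line, the action of $u$ on it is multiplication by a scalar, and the trace is exactly that scalar. So the whole computation reduces to finding the eigenvalue of $u$ on $e_h$.

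Next I would invoke the description of the $\alpha$-projective $H$-action on the etale algebra $k[H,\gamma] = k[H,\gamma,\varepsilon]$ from proposition \ref{ttd}, namely $u.e_h = \varepsilon_u(h)\, e_{uhu^{-1}}$. Because $u$ and $h$ commute we have $uhu^{-1}=h$, hence $u.e_h = \varepsilon_u(h)\,e_h$, and therefore $\chi_{k[H,\gamma]}(h,u) = \varepsilon_u(h)$.

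Finally, since here $F = H$, the multiplier $\varepsilon$ is not a free datum but is fully determined by $\gamma$ through the commutativity condition \eqref{eps3}, that is, by \eqref{atga}: $\varepsilon_u(h) = \gamma(u,h)/\gamma(uhu^{-1},u)$. Substituting $uhu^{-1}=h$ once more (again using that the pair commutes) collapses the denominator to $\gamma(h,u)$, which yields $\chi_{k[H,\gamma]}(h,u) = \gamma(u,h)/\gamma(h,u)$, as claimed.

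There is essentially no deep obstacle here; the only point requiring care is the bookkeeping of the arguments of $\varepsilon$ and $\gamma$ --- one must apply the specialisation \eqref{atga} with the correct slots (taking $f\mapsto u$, $g\mapsto h$) and use the commutativity relation $uhu^{-1}=h$ in both the action step and the simplification of the denominator. I would also make explicit at the outset that $k[H,\gamma]$ is understood to carry precisely the etale structure of proposition \ref{ttd} (equivalently that it is $D(L(H,\gamma))$), so that the action multiplier is indeed the $\varepsilon$ given by \eqref{atga} rather than an arbitrary solution of \eqref{eps1}--\eqref{eps2}.
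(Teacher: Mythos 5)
Your proposal is correct and follows essentially the same route as the paper: one-dimensionality of $k[H,\gamma]_h$, the action formula $u.e_h=\varepsilon_u(h)e_{uhu^{-1}}$ from proposition \ref{ttd}, the specialisation \eqref{atga} of $\varepsilon$ to $\gamma$ when $F=H$, and the commutativity $uhu^{-1}=h$. The extra remark that $k[H,\gamma]$ must carry the etale structure with $\varepsilon$ given by \eqref{atga} is a point the paper leaves implicit, and it is good that you made it explicit.
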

\begin{proof}
For any $h\in H$, the degree-$h$ component $k\left[H,\gamma\right]_h$ is one-dimensional and has the form $$k\left[H,\gamma\right]_h=ke_{h}\ .$$ According to \eqref{atga} $$u.e_{h}=\frac{\gamma\left(u,h\right)}{\gamma\left(uhu^{-1},u\right)}e_{uhu^{-1}},\qquad u\in H\ .$$  For $u\in C_{H}\left(h\right)$, this becomes
$$
u.e_{h}=\frac{\gamma\left(u,h\right)}{\gamma\left(h,u\right)}e_{h}\ .
$$
\end{proof}

Now we are ready to prove the main result of this section.
\bth\lb{chla}
Let $L=L\left(H,\gamma\right)$ be a Lagrangian algebra in  $\Z(G,\alpha)$.  Then the character $\chi_{L}$ has the form
\begin{equation}\label{char4}
\chi_{L}\left(f,g\right)=\sum_{y\in Y}{\frac{\alpha\left(y^{-1}gy,y^{-1}|
f\right)\gamma\left(y^{-1}fy,y^{-1}gy\right)}{\alpha\left(y^{-1},g|f\right)\gamma\left(y^{_1}gy,y^{-1}fy\right)}},\qquad f,g\in G,
\end{equation}
where $Y$ is a set of representatives in $G$ of the cosets from
$$\{y\in G|\ y^{-1}fy\in H,\ y^{-1}gy\in H\}/H\ \subset\ G/H\ .$$
\eth
\begin{proof}
Follows from lemmas \ref{charl} and \ref{tgac}.
\end{proof}

\subsection{Example: $\Z(D_3,\alpha)$}\lb{s3e}

Here $D_3$ is the dihedral group of order 6: $D_3=\langle r,s|\ r^3=s^2=e;srs=r^{-1}\ \rangle.$
\nl
It is known that the third cohomology group $H^3(D_3,\mathbb{C}^*)$ is cyclic of order $6$ (see e.g. \cite[section 6.3]{cg}).  More explicitly, the cohomology group $H^{3}\left(D_3,\mathbb{C}^*\right)$ is generated by the class of the 3-cocycle $\theta$ given by
$$\theta\left(s^{m_1}r^{n_1},s^{m_2}r^{n_2},s^{m_3}r^{n_3}\right)=$$ $$=\exp\left(\frac{2\pi i}{9}\left((-1)^{m_2+m_3}n_{1}\Big((-1)^{m_3}n_2+n_{3}-\left[(-1)^{m_3}n_2+n_3\right]_{3}\Big)+\frac{9}{2}m_1m_2m_3\right)\right)\ ,$$
where $[\ \ ]_3$ denotes taking the residue modulo 3.  

Recall that the second cohomology group $H^2(D_3,\mathbb{C}^*)$ is trivial. This implies that a coboundary for the restriction $\alpha|_H$ of a 3-cocycle $\alpha\in Z^3(D_3,\mathbb{C}^*)$ to a subgroup $H\subset G$ is unique if exists. Thus Lagrangian algebras in $\Z(D_3,\alpha)$ are labelled just by (conjugacy classes of) subgroups on which the restriction of $\alpha$ is cohomologically trivial ({\em admissible subgroups}).

In terms of the cocycle $\alpha$, there are four distinct cases, depending on the order of the class of $\alpha$ in $H^3(D_3,\mathbb{C}^*)$.
\nl
{\bf The case of trivial $\alpha$}
\nl
In this case $\Z(D_3,\alpha) = \Z(D_3)$.  The character table for $\Z(D_3)$ is
$$
\begin{array}{c|cccccccc}
  & (e,e) & \left(e,r\right) & \left(e,s\right) & \left(r,e\right) & \left(r,r\right) & \left(r,r^2\right) & \left(s,e\right) & \left(s,s\right)\\
\hline
\chi_0 & 1 & 1 & 1 & 0 & 0 & 0 & 0 & 0\\
\chi_1 & 1 & 1 & -1 & 0 & 0 & 0 & 0 & 0\\
\chi_2 & 2 & -1 & 0 & 0 & 0 & 0 & 0 & 0\\
\chi_3 & 0 & 0 & 0 & 1 & 1 & 1 & 0 & 0\\
\chi_4 & 0 & 0 & 0 & 1 & \omega & \omega^{-1} & 0 & 0\\
\chi_5 & 0 & 0 & 0 & 1 & \omega^{-1} & \omega & 0 & 0\\
\chi_6 & 0 & 0 & 0 & 0 & 0 & 0 & 1 & 1\\
\chi_7 & 0 & 0 & 0 & 0 & 0 & 0 & 1 & -1\\
\end{array}$$
Here, $\omega$ is a primitive third root of unity.
\nl
All subgroups are admissible. Up to conjugation there are four subgroups
$$\{e\},\qquad C_2=\langle s\rangle,\qquad C_3=\langle r\rangle,\qquad D_3\ .$$
The characters of the corresponding Lagrangian algebras are
$$
\begin{array}{c|cccccccc}
  & (e,e) & \left(e,r\right) & \left(e,s\right) & \left(r,e\right) & \left(r,r\right) & \left(r,r^2\right) & \left(s,e\right) & \left(s,s\right)\\
\hline
\chi_{L(\{e\})} & 4 & 0 & 0 & 0 & 0 & 0 & 0 & 0\\
\chi_{L(C_2)} & 3 & 0 & 1 & 0 & 0 & 0 & 1 & 1\\
\chi_{L(C_3)} & 2 & 2 & 0 & 2 & 2 & 2 & 0 & 0\\
\chi_{L(D_3)} & 1 & 1 & 1 & 1 & 1 & 1 & 1 & 1\\
\end{array}$$
which give the decompositions into irreducible characters
\begin{align*}
\chi_{L(\{e\})} & = \chi_0+\chi_1+2\chi_2\  ,\\
\chi_{L(C_2)} & =  \chi_0+\chi_2+\chi_6\ ,\\
\chi_{L(C_3)} & =  \chi_0+\chi_1+2\chi_3\ ,\\
\chi_{L(D_3)} & = \chi_0+\chi_3+\chi_6 \ .
\end{align*}
\nl
{\bf The case of $\alpha$ of order two.}
\nl
In this case, $\alpha = \theta^3$.  The character table for $\Z(D_3,\theta^3)$ is
$$
\begin{array}{c|cccccccc}
 & (e,e) & \left(e,r\right) & \left(e,s\right) & \left(r,e\right) & \left(r,r\right) & \left(r,r^2\right) & \left(s,e\right) & \left(s,s\right)\\
\hline
\chi_0 & 1 & 1 & 1 & 0 & 0 & 0 & 0 & 0\\
\chi_1 & 1 & 1 & -1 & 0 & 0 & 0 & 0 & 0\\
\chi_2 & 2 & -1 & 0 & 0 & 0 & 0 & 0 & 0\\
\chi_3 & 0 & 0 & 0 & 1 & 1 & 1 & 0 & 0\\
\chi_4 & 0 & 0 & 0 & 1 & \omega & \omega^{-1} & 0 & 0\\
\chi_5 & 0 & 0 & 0 & 1 & \omega^{-1} & \omega & 0 & 0\\
\chi_6 & 0 & 0 & 0 & 0 & 0 & 0 & 1 & \varepsilon\\
\chi_7 & 0 & 0 & 0 & 0 & 0 & 0 & 1 & -\varepsilon\\
\end{array}$$
Here, $\omega$ and $\varepsilon$ are primitive third and fourth roots of unity, respectively.
\nl 
The admissible subgroups are $\{e\}$ and $C_3$.  The characters of the corresponding Lagrangian algebras are
$$
\begin{array}{c|cccccccc}
  & (e,e) & \left(e,r\right) & \left(e,s\right) & \left(r,e\right) & \left(r,r\right) & \left(r,r^2\right) & \left(s,e\right) & \left(s,s\right)\\
\hline
\chi_{L(\{e\})} & 4 & 0 & 0 & 0 & 0 & 0 & 0 & 0\\
\chi_{L(C_3)} & 2 & 2 & 0 & 2 & 1+\omega^{-1} & 1+\omega & 0 & 0\\
\end{array}$$
which give the decompositions into irreducible characters
\begin{align*}
\chi_{L(\{e\})} & = \chi_0+\chi_1+2\chi_2\  ,\\
\chi_{L(C_3)} & =  \chi_0+\chi_1+\chi_3+\chi_5\ ,\\
\end{align*}
\nl
{\bf The case of $\alpha$ of order three.}
\nl
In this case, $\alpha = \theta^2$.  The character table for $\Z(D_3,\theta^2)$ is
$$
\begin{array}{c|cccccccc}
 & (e,e) & \left(e,r\right) & \left(e,s\right) & \left(r,e\right) & \left(r,r\right) & \left(r,r^2\right) & \left(s,e\right) & \left(s,s\right)\\
\hline
\chi_0 & 1 & 1 & 1 & 0 & 0 & 0 & 0 & 0\\
\chi_1 & 1 & 1 & -1 & 0 & 0 & 0 & 0 & 0\\
\chi_2 & 2 & -1 & 0 & 0 & 0 & 0 & 0 & 0\\
\chi_3 & 0 & 0 & 0 & 1 & \eta^{1} & \eta^{2} & 0 & 0\\
\chi_4 & 0 & 0 & 0 & 1 & \eta^{4} & \eta^{8} & 0 & 0\\
\chi_5 & 0 & 0 & 0 & 1 & \eta^{7} & \eta^{3} & 0 & 0\\
\chi_6 & 0 & 0 & 0 & 0 & 0 & 0 & 1 & 1\\
\chi_7 & 0 & 0 & 0 & 0 & 0 & 0 & 1 & -1\\
\end{array}$$
Here $\eta$ is a primitive ninth root of unity.
The admissible subgroups are $\{e\}$ and $C_2$. 
The characters of the corresponding Lagrangian algebras are
$$
\begin{array}{c|cccccccc}
  & (e,e) & \left(e,r\right) & \left(e,s\right) & \left(r,e\right) & \left(r,r\right) & \left(r,r^2\right) & \left(s,e\right) & \left(s,s\right)\\
\hline
\chi_{L(\{e\})} & 4 & 0 & 0 & 0 & 0 & 0 & 0 & 0\\
\chi_{L(C_2)} & 3 & 0 & 1 & 0 & 0 & 0 & 1 & 1\\
\end{array}$$
which give the decompositions into irreducible characters
\begin{align*}
\chi_{L(\{e\})} & = \chi_0+\chi_1+2\chi_2\  ,\\
\chi_{L(C_2)} & =  \chi_0+\chi_2+\chi_6\ ,\\
\end{align*}
\nl
{\bf The case of $\alpha$ of order six.}
\nl
In this case, $\alpha = \theta$.  The character table for $\Z(D_3,\theta)$ is
$$
\begin{array}{c|cccccccc}
\Z(D_3,\theta) & (e,e) & \left(e,r\right) & \left(e,s\right) & \left(r,e\right) & \left(r,r\right) & \left(r,r^2\right) & \left(s,e\right) & \left(s,s\right)\\
\hline
\chi_0 & 1 & 1 & 1 & 0 & 0 & 0 & 0 & 0\\
\chi_1 & 1 & 1 & -1 & 0 & 0 & 0 & 0 & 0\\
\chi_2 & 2 & -1 & 0 & 0 & 0 & 0 & 0 & 0\\
\chi_3 & 0 & 0 & 0 & 1 & \eta & \eta^2 & 0 & 0\\
\chi_4 & 0 & 0 & 0 & 1 & \eta^4 & \eta^8 & 0 & 0\\
\chi_5 & 0 & 0 & 0 & 1 & \eta^7 & \eta^3 & 0 & 0\\
\chi_6 & 0 & 0 & 0 & 0 & 0 & 0 & 1 & \varepsilon\\
\chi_7 & 0 & 0 & 0 & 0 & 0 & 0 & 1 & -\varepsilon\\
\end{array}$$
Here again, $\varepsilon$ and $\eta$ are primitive fourth and ninth roots of unity, respectively.
\nl
Only the trivial subgroup $\{e\}$ is admissible.  The character of the corresponding Lagrangian algebra is 
$$
\begin{array}{c|cccccccc}
  & (e,e) & \left(e,r\right) & \left(e,s\right) & \left(r,e\right) & \left(r,r\right) & \left(r,r^2\right) & \left(s,e\right) & \left(s,s\right)\\
\hline
\chi_{L(\{e\})} & 4 & 0 & 0 & 0 & 0 & 0 & 0 & 0\\
\end{array}$$
with the decomposition into irreducible characters
$$\chi_{L(\{e\})}  = \chi_0+\chi_1+2\chi_2\  .$$

\section*{Appendix A.Certain monoidal categories associated with finite groups and Hochschild-Serre spectral sequence.}\lb{appx}

Let $G$ be a finite group and let $F$ be another finite group, acting on $G$ by group automorphisms. Let
$\gamma:F\times F\times G\to k^*$ be a fuction, satisfying
\begin{equation}\label{1}
\gamma(f,gh|x)\gamma(g,h|x) = \gamma(fg,h|x)\gamma(f,g|h(x)),\quad f,g,h\in F,\ x\in G
\end{equation}
and the normalisation condition: $$\gamma(e,g|x) = \gamma(f,e|x) = \gamma(f,g|e) = 1.$$ Define the category
$\cC(F,G,\gamma)$, whose objects are (finite-dimensional) $G$-graded vector spaces $V=\oplus_{x\in G}V_x$, equipped
with $F$-action $$f:V\to V,\ f(V_x) = V_{f(x)},$$ which is $\gamma$-projective $$(fg)(v) = \gamma(f,g|x)f(g(v)),\quad
x\in V_x.$$ Morphisms are graded, action preserving maps.

Now let $\beta:F\times G\times G\to k^*$ be a normalised function, satisfying
\begin{equation}\label{2}
\beta(fg|x,y)\gamma(f,g|x)\gamma(f,g|y) = \gamma(f,g|xy)\beta(g|x,y)\beta(f|g(x),g(y)).
\end{equation}
Define a tensor product in the category $\cC(F,G,\gamma)$ by $$(U\otimes V)_z = \oplus_{xy=z}U_x\otimes V_y,$$ with the
$F$-action $$f(u\otimes v) = \beta(f|x,y)f(u)\otimes f(v).$$ The condition (\ref{2}) implies that this action is
$\gamma$-projective.

Finally, a normalised function $\alpha:G\times G\times G\to k^*$, satisfying
\begin{equation}\label{3}
\alpha(x,y,z)\beta(f|xy,z)\beta(f|x,y) = \beta(f|x,yz)\beta(f|y,z)\alpha(f(x),f(y),f(z))
\end{equation}
and a 3-cocycle condition
\begin{equation}\label{4}
\alpha(y,z,w)\alpha(x,yz,w)\alpha(x,y,z) = \alpha(x,y,zw)\alpha(xy,z,w).
\end{equation}
Then the map $\alpha:U\otimes (V\otimes W)\to (U\otimes V)\otimes W$
$$u\otimes(v\otimes w)\mapsto\alpha(x,y,z)(u\otimes v)\otimes w,\quad u\in U_x, v\in V_y, w\in W_z$$
is a morphism in the category $\cC(F,G,\gamma)$ (with condition (\ref{3} implying its $F$-equivariance), satisfying the
pentagon axiom (by condition (\ref{4})).
\newline
We denote by $\cC(F,G,\gamma,\beta,\alpha)$ the category $\cC(F,G,\gamma)$ with the monoidal structure defined by
$\beta$ and $\alpha$.

Representation categories and categories of graded vector spaces fit in the series of categories
$\cC(F,G,\gamma,\beta,\alpha)$ as extreme cases, with (categories of modules in) group-theoretical modular categories
appearing as intermediate examples. Indeed, for $G=\{e\}$, $\gamma=\beta=\alpha=1$, the category
$\cC(F,G,\gamma,\beta,\alpha)$ is the category $\Rep(G)$ of representations of $G$. If $F=\{e\}$, $\gamma=\beta=1$,
$\cC(F,G,\gamma,\beta,\alpha)$ is the monoidal category $\cC(G,\alpha)$ of $G$-graded vector spaces with the associator
given by $\alpha$. For $F=G$ with conjugation action and with $\gamma$ and $\beta$ defined by (\ref{acfa}) and
(\ref{tefa}), the category $\cC(F,G,\gamma,\beta,\alpha)$ coincides with $\Z(G,\alpha)$. Finally, for $H$ being a
subgroup of $G$ again with conjugation action and with $\gamma$ and $\beta$ defined by (\ref{acfa}) and (\ref{tefa}),
the category $\cC(H,G,\gamma,\beta,\alpha)$ coincides with the category $\Z(G,\alpha)_{k(G/H)}$ of modules of the
algebra $k(G/H)$ in $\Z(G,\alpha)$.

To compare different categories of the form $\cC(F,G,\gamma,\beta,\alpha)$ we introduce certain monoidal equivalences
between them, which we call for short {\em gauge} equivalences. Let $\cC(F,G,\gamma,\beta,\alpha)$ and
$\cC(F,G,\gamma',\beta',\alpha')$ be two (monoidal) categories. Let $b:F\times G\to k^*$ be a normalised fuction,
satisfying
\begin{equation}\label{c1}
b(fg|x)\gamma(f,g|x) = \gamma'(f,g|x)b(f|x)b(f|g(x)).
\end{equation}
Define a functor $T(b):\cC(F,G,\gamma)\to\cC(F,G,\gamma')$, which preserves the $G$-grading on an object $V$ and
modifies the $F$-action: $$\tilde f(v) = b(f|x)f(v),\quad v\in V_x.$$ The condition (\ref{c1}) implies that the new
action is $\gamma'$-projective. Now let $a:G\times G\to k^*$ be a normalised function such that
\begin{equation}\label{c2}
b(f|xy)a(x,y)\beta(f|x,y) = \beta'(f|x,y)b(f|x)b(f|y)a(f(x),f(y)),
\end{equation}
\begin{equation}\label{c3}
\alpha(x,y,z)a(x,y)a(xy,z) = a(y,z)a(x,yz)\alpha'(x,y,z).
\end{equation}
Define a monoidal structure on the functor $T(b)$: $$u\otimes v\mapsto b(x,y)u\otimes v,\quad u\in U_x, v\in V_y.$$ The
condition (\ref{c2}) implies that this map is $F$-equivariant, while (\ref{c3}) is equivalent to the coherence axiom
for a monoidal functor. Denote the resulting monoidal equivalence by $T(b,a):\cC(F,G,\gamma,\beta,\alpha)\to
\cC(F,G,\gamma',\beta',\alpha')$.

To describe gauge equivalence classes of categories $\cC(F,G,\gamma,\beta,\alpha)$ consider a double complex
$C^{p,q}(F,G,k^*)=C^p(F,C^q(G,k^*))$. Elements of $C^{p,q}(F,G,k^*)$ are normalised functions $$c:F^{\times p}\times
G^{\times q}\to k^*,\quad (f_1,...,f_p,g_1,...,g_q)\mapsto c(f_1,...,f_p|g_1,...,g_q).$$ The vertical differential
$\partial:C^{p,q}\to C^{p,q+1}$ is induced by the standard differential $C^q(G,k^*)\to C^{q+1}(G,k^*)$:
$$\partial(c)(f_1,...,f_p|g_1,...,g_{q+1}) =$$
$$c(f_1,...,f_p|g_2,...,g_{q+1})(\prod_{i=1}^qc(f_1,...,f_p|g_1,...,g_ig_{i+1},...,g_q)^{(-1)^i})c(f_1,...,f_p|g_1,...,g_q)^{(-1)^{q+1}}.$$
The horizontal differential $d:C^{p,q}\to C^{p,q+1}$ is the standarad differential itself:
$$d(c)(f_1,...,f_{p+1}|g_1,...,g_q) =$$
$$c(f_2,...,f_{p+1}|g_1,...,g_q)(\prod_{i=1}^pc(f_1,...,,f_if_{i+1},...,f_p|g_1,...g_q)^{(-1)^i})c(f_1,...,f_p|f_p(g_1),...,f_p(g_q))^{(-1)^{p+1}}.$$
The conditions (\ref{1})-(\ref{4}) can be rewritten as
$$d(\gamma)=1,\quad \partial(\gamma)=d(\beta),\quad \partial(\beta)=d(\alpha),\quad \partial(\alpha)=1.$$
In other words, the collection $(\gamma,\beta,\alpha)$ is a 3-cocycle of the truncated total complex $\tilde
C^n(F,G,k^*) = \oplus_{p=0}^{n-1}C^{p,n-p}(F,G,k^*)$. The conditions (\ref{c1})-\ref{c3}) say that
$$\gamma'd(b)=\gamma,\qquad \beta'd(a)=\beta\partial(b),\qquad \alpha'\partial(a)=\alpha.$$
Hence $(b,c)$ is a 2-coboundary in $\tilde C^*$ for the collection
$(\gamma,\beta,\alpha)(\gamma',\beta',\alpha')^{-1}$. Thus gauge equivalence classes of categories of the form
$\V(F,G,\gamma,\beta,\alpha)$ correspond to 3-cohomology classes of $\tilde C^*$. Note that $\tilde C^*$ is a direct
summand of the (untruncated) total complex $C^n=\oplus_{p=0}^nC^{p,n-p}$, which is quasi-isomorphic to the standard
complex $C^*(F\ltimes G,k^*)$ of the crossed product of groups $F\ltimes G$ \cite{hs}. Since the kernel of the
projection  $C^*\to \tilde C^*$ coincides with $C^*(F,k^*)$ we have an isomorphism
$$H^n(\tilde C)\oplus H^n(F,k^*)\simeq H^n(F\ltimes G,k^*).$$
In particular, when $F=G$, acting on itself by conjugation, the crossed product of groups $G\ltimes G$ is isomorphic to
the product $G\times G$. In this case the cohomology of $C^*$ and $\tilde C^*$ are $$H^n(C)=\oplus_{p=0}^n
H^p(G,H^{n-p}(G,k^*)),\quad H^n(\tilde C)=\oplus_{p=0}^{n-1}H^p(G,H^{n-p}(G,k^*)).$$ A cochain map
$\tau:C^n(G,k^*)=C^0(G,C^n(G,k^*))\to\tilde C^n$ giving the splitting in cohomology $H^n(G,k^*)=H^0(G,H^n(G,k^*))\to
H^n(\tilde C)$ was constructed in \cite{hs} and has the following form: $$\tau(\alpha) = \sum_{p=0}^{n-1}\alpha_p,$$
where $\alpha_p\in C^p(G,C^{n-p}(G,k^*))$ is $$\alpha_p(f_1,...,f_p|g_1,...,g_{n-p}) =
\prod_{\sigma}\alpha(x_1,...,x_n)^{(-1)^\sigma}.$$ Here the product is over all $(p,n-p)$-shuffles and $x_i =
h_ig_{\sigma(i)}h_i^{-1}$ with $h_i$ being the ordered product $\prod_{\sigma(j)>\sigma(i)}f_j$. For example, for
$\alpha\in C^3(G,k^*)$
$$\alpha_1(f|g,h)=\alpha(f,g,h)\alpha(fgf^{-1},f,h)^{-1}\alpha(fgf^{-1},fhf^{-1},f)=\alpha(f|g,h),$$
$$\alpha_2(f,g|h)=\alpha(f,g,h)\alpha(f,ghg^{-1},g)^{-1}\alpha(fghg^{-1}f^{-1},ghg^{-1},g)=\alpha(f,g|h).$$
Thus the monoidal category $\Z(G,\alpha)$ coincides with $\cC(F,G,\gamma,\beta,\alpha)$, where $(\gamma,\beta,\alpha)=\tau(\alpha)$.

Now we return to the categories $\cC(F,G,\gamma',\beta',\alpha')$. We assume that $F=G$ with the conjugation action on $G$.
In this case we can ask when the functorial isomorphism, defined by
\begin{equation}\label{bra}
c_{U,V}(u\otimes v) = c(x,y)x(v)\otimes u,\quad u\in U_x, v\in V_y.
\end{equation}
is a braiding in $\cC(F,G,\gamma',\beta',\alpha')$.
The hexagon axioms are equivalent to the equations
\begin{equation}\label{b1}
\beta(x|y,z)c(x,yz) = c(x,y)c(x,z)\alpha(x|y,z),
\end{equation}
\begin{equation}\label{b2}
c(xy,z)\gamma(z|x,y) = c(y,z)c(x,yzy^{-1})\alpha(x,y|z).
\end{equation}
Denote by $\cC(G,G,\gamma,\beta,\alpha,c)$ a braided category $\cC(G,G,\gamma,\beta,\alpha)$ with the braiding
(\ref{bra}). A gauge equivalence $T(b,a):\cC(F,G,\gamma,\beta,\alpha,c)\to \cC(F,G,\gamma',\beta',\alpha',c')$ is
braided iff
\begin{equation}\label{beq}
b(x,y)c'(x,y)a(x|y) = b(xyx^{-1},x)c(x,y).
\end{equation}

The equations (\ref{b1}),(\ref{b2}) can be intrerpreted as a coboundary condition
$d(c)=(\gamma,\beta,\alpha)\tau(\alpha)^{-1}$ (here $c$ is considered as an element of $C^1(G,C^1(G,k^*))$). Thus, at
least at the level of monoidal categories, any $\cC(G,G,\gamma,\beta,\alpha)$ with braiding of the form (\ref{bra}) is
equivalent to $\Z(G,\alpha)$. Note that in the assumption $(\gamma,\beta,\alpha)=\tau(\alpha)$ the equations
(\ref{b1}),(\ref{b2}) turn into bi-multiplicativity condition for $c$: $$c(x,yz)=c(x,y)c(x,z),\quad
c(xy,z)=c(x,z)c(y,z).$$ Thus by (\ref{beq}) the guage equivalence $T(1,c)$ is a braided equivalence between
$\cC(G,G,\gamma,\beta,\alpha,c)$ (with $(\gamma,\beta,\alpha)=\tau(\alpha)$) and $\Z(G,\alpha)$. In particular we have
proved the following.
\begin{prop}\label{chace}
Braided monoidal category of the form $\cC(G,G,\gamma,\beta,\alpha,c)$ is braided equivalent to $\Z(G,\alpha)$.
\end{prop}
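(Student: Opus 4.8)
The plan is to push the entire question into the cohomological formalism of the appendix, in which a category $\cC(G,G,\gamma,\beta,\alpha)$ is classified up to gauge equivalence by the class of the $3$-cocycle $(\gamma,\beta,\alpha)$ in the truncated total complex $\tilde C^*(G,G,k^*)$, the equivalences $T(b,a)$ realising the $2$-coboundaries. The braiding $c$ is recorded as an element of $C^1(G,C^1(G,k^*))$, and the first step is to observe that the two hexagon identities \eqref{b1} and \eqref{b2} amount exactly to the coboundary relation $d(c)=(\gamma,\beta,\alpha)\tau(\alpha)^{-1}$. This identity does most of the work: it exhibits $(\gamma,\beta,\alpha)$ and the distinguished representative $\tau(\alpha)$ as cohomologous in $\tilde C^*$, so that, since gauge-equivalence classes are parametrised by $H^3(\tilde C)$, there is a monoidal gauge equivalence $\cC(G,G,\gamma,\beta,\alpha)\to\cC(G,G,\tau(\alpha))=\Z(G,\alpha)$.

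To upgrade this to a \emph{braided} equivalence I would transport the braiding along the monoidal equivalence just obtained, reducing to the normalised situation $(\gamma,\beta,\alpha)=\tau(\alpha)$. The decisive simplification is that under this normalisation the $\beta$- and $\gamma$-terms of \eqref{b1} and \eqref{b2} cancel against the matching components of $\tau(\alpha)$, and the hexagons degenerate to the bimultiplicativity relations $c(x,yz)=c(x,y)c(x,z)$ and $c(xy,z)=c(x,z)c(y,z)$, while the conjugation term in \eqref{b2} renders $c$ invariant under conjugation. Thus $(b,a)=(1,c)$ is a legitimate gauge datum, and substituting it into the braided-compatibility criterion \eqref{beq} collapses that relation to $c'(x,y)c(x,y)=c(x,y)$, forcing the transported braiding $c'$ to be trivial. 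Since $c'=1$ is precisely the braiding \eqref{br} carried by $\Z(G,\alpha)$, the gauge equivalence $T(1,c)$ is braided, and composing it with the monoidal reduction of the first step finishes the proof.

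The step I expect to demand the most care is the transport of the braiding in the passage to $(\gamma,\beta,\alpha)=\tau(\alpha)$: the $2$-coboundary $(b,a)$ that trivialises the monoidal discrepancy will in general rescale the braiding, and I must confirm that the functorial isomorphism one lands on is again of the special form \eqref{bra} -- encoded by an honest function $G\times G\to k^*$ -- so that the normalised hexagons, and hence bimultiplicativity and conjugation invariance, genuinely apply to it. This is the assertion that braidings of the form \eqref{bra} are stable under gauge equivalence, which is the content of \eqref{beq} itself; once it is secured, the remaining verifications (that the transported $c$ is a conjugation-invariant bicharacter, and the one-line substitution into \eqref{beq}) are routine.
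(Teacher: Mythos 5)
Your proposal is correct and follows essentially the same route as the paper: you read the hexagon axioms as the coboundary relation $d(c)=(\gamma,\beta,\alpha)\tau(\alpha)^{-1}$ to obtain a monoidal gauge equivalence with $\Z(G,\alpha)$, reduce to the normalised case $(\gamma,\beta,\alpha)=\tau(\alpha)$ where the hexagons become bimultiplicativity of $c$, and then use \eqref{beq} to see that $T(1,c)$ is a braided equivalence onto $\Z(G,\alpha)$ with trivial target braiding $c'=1$. Your explicit attention to transporting the braiding along the first (monoidal) equivalence and checking it stays of the form \eqref{bra} is a point the paper passes over silently, and is handled correctly.
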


\end{document}